\documentclass[12pt]{amsart}
\headheight=8pt     \topmargin=0pt
\textheight=632pt   \textwidth=432pt
\oddsidemargin=18pt \evensidemargin=18pt




\theoremstyle{plain}
\newtheorem{thm}{Theorem}[section]
\newtheorem{lem}[thm]{Lemma}
\newtheorem{cor}[thm]{Corollary}
\newtheorem{prop}[thm]{Proposition}
\newtheorem{ques}[thm]{Question}

\theoremstyle{remark}
\newtheorem{rem}[thm]{Remark}

\theoremstyle{definition}
\newtheorem{defi}[thm]{Definition}

\usepackage{amscd,amssymb,comment,epic,eepic,euscript,graphics}
\usepackage{enumerate}
\usepackage[initials]{amsrefs}

\usepackage{color}



\newcount\theTime
\newcount\theHour
\newcount\theMinute
\newcount\theMinuteTens
\newcount\theScratch
\theTime=\number\time
\theHour=\theTime
\divide\theHour by 60
\theScratch=\theHour
\multiply\theScratch by 60
\theMinute=\theTime
\advance\theMinute by -\theScratch
\theMinuteTens=\theMinute
\divide\theMinuteTens by 10
\theScratch=\theMinuteTens
\multiply\theScratch by 10
\advance\theMinute by -\theScratch

\def\today{{\number\day\space
 \ifcase\month\or
  January\or February\or March\or April\or May\or June\or
  July\or August\or September\or October\or November\or December\fi
 \space\number\year}}


\newcommand\Cpx{{\mathbb C}}
\newcommand\Dc{{\mathcal{D}}}

\newcommand\eps{\varepsilon}

\newcommand\HEu{{\EuScript H}}                   

\newcommand\Lc{{\mathcal{L}}}
\newcommand\Mcal{{\mathcal{M}}}
\newcommand\Nats{{\mathbb N}}

\newcommand\Reals{{\mathbb R}}
\newcommand\restrict{{\upharpoonright}}
\newcommand\Sc{{\mathcal{S}}}

\newcommand\Tcirc{{\mathbb T}}

\newcommand\Uc{{\mathcal{U}}}
\newcommand\Vc{{\mathcal{V}}}

\begin{document}

\title[The algebras $\Lc_{\log}$]{Algebras of log-integrable functions and operators}

\author[Dykema]{K. Dykema$^*$}
\address{Ken Dykema, Department of Mathematics, Texas A\&M University, College Station, TX, USA.}
\email{ken.dykema@math.tamu.edu}
\thanks{\footnotesize ${}^{*}$ Research supported in part by NSF grant DMS--1202660.}
\author[Sukochev]{F. Sukochev$^{\S}$}
\address{Fedor Sukochev, School of Mathematics and Statistics, University of new South Wales, Kensington, NSW, Australia.}
\email{f.sukochev@math.unsw.edu.au}
\thanks{\footnotesize ${}^{\S}$ Research supported by ARC}
\author[Zanin]{D. Zanin$^{\S}$}
\address{Dmitriy Zanin, School of Mathematics and Statistics, University of new South Wales, Kensington, NSW, Australia.}
\email{d.zanin@math.unsw.edu.au}

\subjclass[2000]{46H35 (46L52, 46E30, 30H15)}


\begin{abstract}
We show that certain spaces of $\log$-integrable functions and operators are complete topological $*$-algebras with respect
to a natural metric space structure.
We explore connections with the Nevanlinna class of holomorphic functions.
\end{abstract}

\date{September 10, 2015}

\maketitle

\section{Introduction}

Let $(\Omega,\nu)$ be a measure space.
The symmetric function space $\Lc_{\log}(\Omega,\nu)$
consisting of measurable functions $f$ such that $\int\log(1+|f|)\,d\nu<\infty$,
arises naturally
as a commutative version of an algebra of operators important in von Neumann algebra theory.
Both this symmetric function space  and the (noncommutative) operator algebra version are closed under multiplication and, thus, are algebras.
In this note, we show that both are complete topological $*$-algebras, with respect to a metric space structure 
that arises from a naturally occuring F-norm, $\|f\|_{\log}:=\int\log(1+|f|)\,d\nu$.

In fact, we will see below that the space $\Lc_{\log}(\Omega,\nu)$ is a non-locally-convex (generalized) Orlicz space~\cite{MO61},
and the F-norm $\|\cdot\|_{\log}$ is equivalent to the one from~\cite{MO61}.
The completeness and other properties of $\Lc_{\log}$ that we prove will, thus, follow from more general results about such Orlicz
spaces.
However, we want to prove them using the F-norm $\|\cdot\|_{\log}$ because these are easier versions of the noncommutative case
(discussed below).

In the case that the measure space is the unit circle in the complex plane endowed with Lebesgue measure $m$,
boundary values of Nevanlinna functions are in the symmetric function space $\Lc(\Tcirc,m)$ and the map sending a Nevanlinna
function to its boundary values provides
an injective, continuous algebra homomorphism from the Nevanlinna class to $\Lc_{\log}(\Tcirc,m)$.
Since the Nevanlinna class is not well behaved under the usual metric, we propose studying the topological structure on the Nevanlinna
class provided by $\|\cdot\|_{\log}$.

If $(\Mcal,\tau)$ is a pair consisting of a von Neumann algebra $\Mcal$ and a normal, faithful, finite or semifinite trace $\tau$,
then $\Lc_{\log}(\Mcal,\tau)$ is defined to be the set of all $\tau$-measurable operators $T$ affiliated with $\Mcal$,
such that $\tau(\log(1+|T|))<\infty$.
In the case of a finite trace $\tau$,
this operator algebra version is important in that it is the natural domain of the Fuglede-Kadison
determinant~\cite{FK} and Brown measure~\cite{Brown}.
See~\cite{HS07} for a treatment of these topics in the setting of $\Lc_{\log}(\Mcal,\tau)$.

Our original motivation for investigating these algebras and proving that they are complete topological $*$-algebras
was for use in an effort to construct certain invariant subspaces and upper-triangular-type decompositions of unbounded
operators belonging to $\Lc_{\log}(\Mcal,\tau)$.
This was accomplished in~\cite{DSZ} using, in an integral way, the complete topological $*$-algebra structure of 
$\Lc_{\log}(\Mcal,\tau)$.

The contents of the rest of the paper are as follows.
Section~\ref{sec:comm} develops the theory of the symmetric function spaces (and algebras) $\Lc_{\log}(\Omega,\nu)$.
The brief Section~\ref{sec:Nev} makes the observations and asks questions about Nevanlinna class.
Section~\ref{sec:expL1} develops the theory of the (noncommutative) symmetric operator spaces (and algebras) $\Lc_{\log}(\Mcal,\tau)$.
We note that, although the commutative case $\Lc_{\log}(\Omega,\nu)$ is formally a special case
of symmetric operator spaces $\Lc_{\log}(\Mcal,\tau)$
and though the main results about $\Lc_{\log}(\Omega,\nu)$ are obtained in the greater generality of
the symmetric operator spaces $\Lc_{\log}(\Mcal,\tau)$,
we prefer to treat the commutative case separately, because the proofs are easier in this case, and because it may be
of some independent interest.

\medskip\noindent
{\bf Acknowledgements.}
The authors thank Laszlo Lempert, Mieczys{\l}aw Masty{\l}o and Alexis Poltoratski for helpful comments.

\section{Commutative $\Lc_{\log}$}
\label{sec:comm}

Fix a measure space $(\Omega,\nu)$ and let $\Lc_0=\Lc_0(\Omega,\nu)$ be the set of measurable, complex valued functions
on $\Omega$, 
with the usual convention that functions agreeing almost everywhere are identified as the same.
Let
\[
\Lc_{\log}(\Omega,\nu)=\left\{f\in\Lc_0\;\bigg|\;\int\log(1+|f|)\,d\nu<+\infty\right\}.
\]
If $\nu$ is finite, then we clearly have $\Lc_p(\Omega,\nu)\subseteq\Lc_{\log}(\Omega,\tau)$ for every $p\in(0,\infty)$.
It is easy to check that $\Lc_{\log}(\Omega,\nu)$ is a vector space.
For $f\in\Lc_{\log}(\Omega,\nu)$, define
\[
\|f\|_{\log}=\int(\log(1+|f|)\,d\nu.
\]
The next lemma shows that $\|\cdot\|_{\log}$ is an F-norm (see, e.g., \S1.2 of~\cite{KPR84}).
\begin{lem}\label{lem:Fnorm}
\begin{enumerate}[(a)]
\item\label{it:non0} $\|f\|_{\log}>0$ for all $f\ne0$.
\item\label{it:scale} $\|\alpha f\|_{\log}\le\|f\|_{\log}$ for all $f$ and all scalars $\alpha$ with $|\alpha|\le1$.
\item\label{it:lim0} $\lim_{\alpha\to0}\|\alpha f\|_{\log}=0$  for all $f$.
\item\label{it:subadd} $\|f+g\|_{\log}\le\|f\|_{\log}+\|g\|_{\log}$.
\end{enumerate}
\end{lem}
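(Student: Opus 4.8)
The plan is to reduce each of the four assertions to a pointwise statement about the integrand $\log(1+|f|)$ and then integrate, so that the only properties of $\phi(t)=\log(1+t)$ on $[0,\infty)$ that I need are that it is nonnegative, nondecreasing, and satisfies an elementary submultiplicative-type estimate; the sole analytic tool is the dominated convergence theorem. Observe first that $\log(1+|f|)\ge0$ everywhere, so $\|f\|_{\log}\in[0,\infty)$ for $f\in\Lc_{\log}(\Omega,\nu)$, and each pointwise inequality below may be integrated termwise over $\Omega$.

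For (a), if $f\ne0$ in $\Lc_0$ then the set $\{|f|>0\}$ has positive $\nu$-measure, and on that set $\log(1+|f|)>0$; since the integrand is everywhere nonnegative, it follows that $\|f\|_{\log}>0$. For (b), if $|\alpha|\le1$ then $|\alpha f|=|\alpha|\,|f|\le|f|$ pointwise, and the monotonicity of $\phi$ gives $\log(1+|\alpha f|)\le\log(1+|f|)$; integrating yields $\|\alpha f\|_{\log}\le\|f\|_{\log}$.

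For (c), as $\alpha\to0$ we have $|\alpha f|\to0$ pointwise, hence $\log(1+|\alpha f|)\to0$ pointwise. Restricting to $|\alpha|\le1$, the bound established for (b) shows $\log(1+|\alpha f|)\le\log(1+|f|)$, and the dominating function $\log(1+|f|)$ is integrable precisely because $f\in\Lc_{\log}(\Omega,\nu)$; the dominated convergence theorem then gives $\lim_{\alpha\to0}\|\alpha f\|_{\log}=0$.

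The heart of the matter is the triangle inequality (d), which rests on the pointwise estimate
\[
\log(1+|f+g|)\le\log\bigl(1+|f|+|g|\bigr)\le\log\bigl((1+|f|)(1+|g|)\bigr)=\log(1+|f|)+\log(1+|g|),
\]
where the first inequality uses $|f+g|\le|f|+|g|$ together with the monotonicity of $\phi$, and the second uses the elementary bound $1+a+b\le(1+a)(1+b)$ valid for all $a,b\ge0$. Integrating over $\Omega$ gives $\|f+g\|_{\log}\le\|f\|_{\log}+\|g\|_{\log}$. I expect (c) to be the only step requiring genuine care, namely the justification for interchanging the limit with the integral; by contrast (d), although it is the substantive F-norm axiom, reduces entirely to the single algebraic inequality displayed above.
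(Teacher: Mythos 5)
Your proof is correct and follows essentially the same route as the paper: the triangle inequality rests on the identical pointwise factorization $1+|f|+|g|\le(1+|f|)(1+|g|)=1+|f|+|g|+|fg|$, which is exactly the estimate the paper integrates. The only difference is that you spell out parts (a)--(c) (positivity on a set of positive measure, monotonicity of $\log(1+t)$, and dominated convergence with dominating function $\log(1+|f|)$), which the paper simply declares clear; your justifications are the intended ones.
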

\begin{proof}
Parts \eqref{it:non0}--\eqref{it:lim0} are clear.
For part~\eqref{it:subadd}, use
\begin{align*}
\|f+g\|_{\log}&=\int\log(1+|f+g|)\,d\nu 
\le\int\log(1+|f|+|g|+|fg|)\,d\nu \\
&=\int\big(\log(1+|f|)+\log(1+|g|)\big)\,d\nu=\|f\|_{\log}+\|g\|_{\log}.
\end{align*}
\end{proof}

A consequence of Lemma~\ref{lem:Fnorm} is (see \S1.2 of~\cite{KPR84}) that
\begin{equation}\label{eq:metric}
d_{\log}(f,g):=\|f-g\|_{\log}
\end{equation}
is a translation invariant metric on $\Lc_{\log}(\Omega,\nu)$,
making $\Lc_{\log}(\Omega,\nu)$ into a topological vector space.
We henceforth regard $\Lc_{\log}(\Omega,\nu)$ as being endowed with this topology.

\begin{rem}\label{rem:Orlicz}
The function $\varphi(t)=\log(1+t)$ is a $\varphi$-function as defined in~\cite{MO61}.

We easily see that our space $\Lc_{\log}(\Omega,\nu)$ is, as a vector space, equal to the Orlicz space $\Lc^{*\varphi}(\Omega,\nu)$
which, in this case, equals $\Lc^{\varphi}(\Omega,\nu)$.
defined in \S2,1 of~\cite{MO61}.
It is also easy to see that the F-norm
\[
\|f\|_\varphi:=\inf\{\lambda>0\mid\left\|\frac{|f|}\lambda\right\|_{\log}\le\lambda\}
\]
on $\Lc_{\log}$
from~\cite{MO61}
(see, for example, Theorem 1.1 of~\cite{Ma89} for a proof that this is an F-norm)
is not equal to $\|f\|_{\log}$.
However, the next result implies that these two F-norms yield equivalent metrics and the same topology on $\Lc_{\log}$.
\end{rem}
\begin{lem}
Let $f\in\Lc_{\log}$.
\begin{enumerate}[(a)]
\item\label{it:logless} If $\|f\|_\varphi<1$, then $\|f\|_{\log}\le\|f\|_\varphi$.
\item\label{it:rhobound} If $N$ is an integer, then
\[
\|f\|_{\log}\le\frac1{N^2}\quad\implies\quad\|f\|_\varphi<\frac1N.
\]
\end{enumerate}
\end{lem}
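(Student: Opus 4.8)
The plan is to reduce both parts to the behaviour of the single auxiliary function
\[
F(\lambda) := \left\| \frac{|f|}{\lambda} \right\|_{\log} = \int \log\Big(1 + \frac{|f|}{\lambda}\Big)\, d\nu, \qquad \lambda > 0,
\]
so that, by the definition of $\|\cdot\|_\varphi$, we have $\|f\|_\varphi = \inf\{\lambda > 0 : F(\lambda) \le \lambda\}$, and to note the basic identity $F(1) = \|f\|_{\log}$. First I would record the elementary inequality that drives everything: for $a \ge 1$ and $t \ge 0$,
\[
\log(1 + at) \le a\,\log(1 + t),
\]
with strict inequality when $a > 1$ and $t > 0$; this is checked by observing that $t \mapsto a\log(1+t) - \log(1+at)$ vanishes at $t = 0$ and has nonnegative derivative. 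Applying it with $a = 1/\lambda$ for $0 < \lambda \le 1$ gives $F(\lambda) \le \frac1\lambda\|f\|_{\log} < \infty$, so $F$ is finite on $(0,\infty)$; combined with dominated convergence this shows $F$ is continuous, and it is plainly decreasing (strictly so when $f \ne 0$) with $F(\lambda) \to \infty$ as $\lambda \to 0^+$ and $F(\lambda) \to 0$ as $\lambda \to \infty$. Hence $\{\lambda : F(\lambda) \le \lambda\}$ is a half-line $[\lambda_*,\infty)$ and $\|f\|_\varphi = \lambda_*$ is the unique solution of $F(\lambda_*) = \lambda_*$.

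For part (a), the hypothesis is $\lambda_* = \|f\|_\varphi < 1$. Since $F$ is decreasing and $\lambda_* < 1$, I would simply write $\|f\|_{\log} = F(1) \le F(\lambda_*) = \lambda_* = \|f\|_\varphi$, which is the asserted bound. To avoid invoking continuity of $F$ one can argue directly from the infimum: for each $\eps > 0$ choose $\lambda \in (\|f\|_\varphi, \|f\|_\varphi + \eps) \cap (0,1)$ with $F(\lambda) \le \lambda$, so that $\|f\|_{\log} = F(1) \le F(\lambda) \le \lambda < \|f\|_\varphi + \eps$, and let $\eps \to 0$.

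For part (b) one may assume $f \ne 0$, the case $f = 0$ being trivial. Taking $\lambda = 1/N$ and applying the key inequality with $a = N$ gives
\[
F\Big(\frac1N\Big) = \int \log(1 + N|f|)\, d\nu \le N \int \log(1 + |f|)\, d\nu = N\,\|f\|_{\log} \le N \cdot \frac1{N^2} = \frac1N,
\]
which already yields $\|f\|_\varphi \le 1/N$. The main point, and the step I expect to require the most care, is upgrading this to the strict inequality $\|f\|_\varphi < 1/N$: for $N \ge 2$ and $f \ne 0$ the pointwise inequality above is strict on the set $\{f \ne 0\}$, so in fact $F(1/N) < 1/N$, and since $g(\lambda) := F(\lambda) - \lambda$ is strictly decreasing with $g(\lambda_*) = 0$ and $g(1/N) < 0$ we conclude $\lambda_* < 1/N$, i.e. $\|f\|_\varphi < 1/N$. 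It is here that both the strictness of $\log(1+Nt) < N\log(1+t)$ (which forces $N \ge 2$, since $N = 1$ produces equality) and the strict monotonicity of $F$ are genuinely used.
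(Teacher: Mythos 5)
Your argument is correct and, at bottom, runs along the same lines as the paper's, so let me focus on what differs. For part (a), your fallback ``directly from the infimum'' argument is precisely the paper's proof: monotonicity of $\lambda\mapsto\int\log(1+|f|/\lambda)\,d\nu$ for $\lambda\le1$, then take the infimum over admissible $\lambda<1$; the fixed-point apparatus ($F$ continuous, strictly decreasing, $F(\lambda_*)=\lambda_*$) is sound but not needed for this part. For part (b), your pointwise inequality $\log(1+Nt)\le N\log(1+t)$ integrates to exactly the bound $\|Nf\|_{\log}\le N\|f\|_{\log}$ that the paper obtains from subadditivity of the F-norm (Lemma~\ref{lem:Fnorm}\eqref{it:subadd}, writing $Nf=f+\cdots+f$), so the quantitative content is identical. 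What you genuinely add is the strictness analysis, and it is a real improvement: the paper's chain ``$\|Nf\|_{\log}\le N\|f\|_{\log}<\frac1N$'' does not follow from the non-strict hypothesis $\|f\|_{\log}\le\frac1{N^2}$, and even granted $F(1/N)\le\frac1N$ the definition of the infimum only yields $\|f\|_\varphi\le\frac1N$. Your observation that for $N\ge2$ and $f\ne0$ the pointwise inequality is strict on $\{f\ne0\}$, so that $F(1/N)<\frac1N$, and that continuity (or your strictly decreasing $g(\lambda)=F(\lambda)-\lambda$) then pushes the infimum strictly below $\frac1N$, supplies exactly the step the paper elides.

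One caveat: your restriction to $N\ge2$ is forced by the statement, not by your method, and you should say so explicitly rather than leave the case $N=1$ silently uncovered. For $N=1$ the claimed strict inequality actually fails at the boundary: take $f\equiv e-1$ on $[0,1]$ with Lebesgue measure, so that $\|f\|_{\log}=1\le1$, while $F(\lambda)=\log\bigl(1+(e-1)/\lambda\bigr)>1>\lambda$ for every $\lambda<1$ and $F(1)=1$, whence $\|f\|_\varphi=1\not<1$. Thus no proof can cover $N=1$ as stated (the paper's own proof has the same un-flagged limitation). Since the lemma is used only for large $N$, to establish equivalence of the metrics induced by $\|\cdot\|_{\log}$ and $\|\cdot\|_\varphi$, nothing is lost; but a careful write-up should either restrict to $N\ge2$ or weaken the conclusion to $\|f\|_\varphi\le\frac1N$.
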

\begin{proof}
If $|f|_\varphi<1$ then for every $\lambda<1$ such that $\|\frac f\lambda\|_{\log}\le\lambda$,
we have
\[
\|f\|_{\log}=\int\log(1+|f|)\,d\nu\le\int\log(1+\frac{|f|}\lambda)\,d\nu\le\lambda.
\]
Taking the infimum over such $\lambda$ proves~\eqref{it:logless}.

If $N$ is an integer and $\|f\|_{\log}\le\frac1{N^2}$ then by the subadditivity, Lemma~\ref{lem:Fnorm}\eqref{it:subadd},
we have
\[
\int\log(1+N|f|)\,d\nu=\|Nf\|_{\log}\le N\|f\|_{\log}<\frac1N,
\]
and this implies $\|f\|_\varphi<\frac1N$.
\end{proof}

The following result follows from Theorem 3.1 of~\cite{Ma89} and the equivalence of F-norms proved above.
However, here is a quick direct proof.
\begin{prop}
The space $\Lc_{\log}(\Omega,\nu)$ is a complete metric space.
\end{prop}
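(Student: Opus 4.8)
The plan is to show that every $d_{\log}$-Cauchy sequence converges, using the classical strategy: first establish that $d_{\log}$-convergence is controlled by convergence in measure, then extract an almost-everywhere convergent subsequence whose pointwise limit $f$ will be the candidate limit, and finally invoke Fatou's lemma to upgrade pointwise convergence of the subsequence to $d_{\log}$-convergence of the full sequence, along the way checking that $f\in\Lc_{\log}$.

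First I would record a Chebyshev-type estimate. For measurable $g$ and $\epsilon>0$, monotonicity of $\log$ gives
\[
\|g\|_{\log}\ge\int_{\{|g|>\epsilon\}}\log(1+|g|)\,d\nu\ge\log(1+\epsilon)\,\nu(\{|g|>\epsilon\}),
\]
so $\nu(\{|g|>\epsilon\})\le\|g\|_{\log}/\log(1+\epsilon)$. Applied to $g=f_n-f_m$, this shows at once that a $d_{\log}$-Cauchy sequence $(f_n)$ is Cauchy in measure. Before extracting a subsequence I would observe that the argument really lives on a $\sigma$-finite set: for each $n$ the set $\{|f_n|>0\}$ is $\sigma$-finite, being the countable union of the sets $\{|f_n|>1/k\}$, each of finite measure by the same estimate with $\epsilon=1/k$. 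Hence $\Sigma:=\bigcup_n\{|f_n|>0\}$ is $\sigma$-finite and every $f_n$ vanishes off $\Sigma$. On $\Sigma$ the usual theory of convergence in measure applies, so I can extract a subsequence $(f_{n_k})$ converging almost everywhere to some measurable $f$, taken to vanish off $\Sigma$.

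It then remains to prove $f\in\Lc_{\log}$ and $\|f_n-f\|_{\log}\to0$. Fix $\epsilon>0$ and choose $N$ with $\|f_n-f_m\|_{\log}<\epsilon$ for all $n,m\ge N$. For fixed $n\ge N$, as $k\to\infty$ we have $\log(1+|f_n-f_{n_k}|)\to\log(1+|f_n-f|)$ almost everywhere, so Fatou's lemma yields
\[
\|f_n-f\|_{\log}\le\liminf_{k\to\infty}\|f_n-f_{n_k}\|_{\log}\le\epsilon.
\]
In particular $f-f_N\in\Lc_{\log}$, and since $f=(f-f_N)+f_N$ with $\Lc_{\log}$ a vector space (equivalently, by subadditivity, Lemma~\ref{lem:Fnorm}\eqref{it:subadd}), we get $f\in\Lc_{\log}$. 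Moreover the displayed bound says precisely that $f_n\to f$ in $d_{\log}$.

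I anticipate the only genuinely delicate point to be the passage from Cauchy-in-measure to an almost-everywhere convergent subsequence on a possibly non-$\sigma$-finite measure space; the reduction to the $\sigma$-finite set $\Sigma$ is exactly what makes this routine, and it is worth isolating so that the extraction step rests on the familiar $\sigma$-finite statement. Everything else is a direct consequence of the Chebyshev estimate and Fatou's lemma, both immediate from the definition of $\|\cdot\|_{\log}$.
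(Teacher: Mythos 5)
Your proof is correct and follows essentially the same outline as the paper's: a $\|\cdot\|_{\log}$-Cauchy sequence is Cauchy in measure (via your Chebyshev estimate), one extracts an almost-everywhere convergent subsequence, and then upgrades to convergence in the metric, with the $\sigma$-finiteness reduction a harmless extra precaution. The only divergence is at the final step, where the paper invokes the Dominated Convergence Theorem while you use Fatou's lemma; your variant is in fact slightly cleaner, since it simultaneously gives $f\in\Lc_{\log}(\Omega,\nu)$ and $\|f_n-f\|_{\log}\le\eps$ directly from the Cauchy condition, whereas the DCT route requires first manufacturing a dominating function (e.g., by passing to a rapidly Cauchy subsequence).
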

\begin{proof} 
This is a standard argument.
If $(f_n)_{n\ge1}$ is a Cauchy sequence in $\Lc_{\log}(\Omega,\nu)$, then this sequence is Cauchy in measure.
Hence, a subsequence of it converges almost everywhere to a measurable function $f$.
Now, using the Dominated Convergence Theorem, we can show that this subsequence
converges with respect to the metric~\eqref{eq:metric} to $f$
and, thus, the entire sequence $(f_n)_{n\ge1}$ converges to $f$.
\end{proof}

We now examine multiplication.

\begin{lem}\label{lem:mult}
Let $f,g,h\in\Lc_{\log}(\Omega,\nu)$ and let $K$ be a positive real number.
Then $fg\in\Lc_{\log}(\Omega,\nu)$ and
\begin{enumerate}[(a)]
\item\label{it:Kf} $\|Kf\|_{\log}\le\max(K,1)\|f\|_{\log}$,
\item\label{it:fg} $\|fg\|_{\log}\le\|f\|_{\log}+\|g\|_{\log}$,
\item\label{it:fgfh} if $|g|\le|h|$ almost everywhere, then $\|fg\|_{\log}\le\|fh\|_{\log}$.
\end{enumerate}
\end{lem}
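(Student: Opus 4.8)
The plan is to derive all three inequalities, together with the membership $fg\in\Lc_{\log}(\Omega,\nu)$, from elementary pointwise estimates inside the integral, exploiting that $t\mapsto\log(1+t)$ is monotone increasing on $[0,\infty)$. The only slightly nontrivial ingredient is a Bernoulli-type inequality needed for part~\eqref{it:Kf}, which I expect to be the main (though still routine) obstacle.

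First I would establish~\eqref{it:fg} together with the assertion $fg\in\Lc_{\log}(\Omega,\nu)$, since these go hand in hand. The key observation is the pointwise inequality
\[
1+|f||g|\le (1+|f|)(1+|g|),
\]
valid because the right-hand side expands to $1+|f|+|g|+|f||g|$ and $|f|+|g|\ge0$. Taking logarithms and integrating yields
\[
\|fg\|_{\log}=\int\log(1+|fg|)\,d\nu\le\int\big(\log(1+|f|)+\log(1+|g|)\big)\,d\nu=\|f\|_{\log}+\|g\|_{\log}.
\]
Since $f,g\in\Lc_{\log}(\Omega,\nu)$, the right-hand side is finite, so $fg\in\Lc_{\log}(\Omega,\nu)$ and~\eqref{it:fg} holds simultaneously. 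Part~\eqref{it:fgfh} is then immediate: if $|g|\le|h|$ almost everywhere, then $|f||g|\le|f||h|$ almost everywhere, and monotonicity of $\log(1+\cdot)$ gives $\log(1+|fg|)\le\log(1+|fh|)$ pointwise almost everywhere, which integrates to $\|fg\|_{\log}\le\|fh\|_{\log}$.

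For part~\eqref{it:Kf} I would split into two cases. If $K\le1$, then $\log(1+K|f|)\le\log(1+|f|)$ pointwise, so integrating gives $\|Kf\|_{\log}\le\|f\|_{\log}=\max(K,1)\|f\|_{\log}$. If $K>1$, the needed pointwise bound is $\log(1+Kt)\le K\log(1+t)$ for $t\ge0$, equivalently $1+Kt\le(1+t)^K$; this is Bernoulli's inequality, which follows since both sides agree at $t=0$ while the derivative $K(1+t)^{K-1}$ of the right-hand side dominates the derivative $K$ of the left-hand side for $t\ge0$ and $K\ge1$. Applying this with $t=|f|$ and integrating yields $\|Kf\|_{\log}\le K\|f\|_{\log}=\max(K,1)\|f\|_{\log}$, completing the proof. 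I expect this Bernoulli step to be the only place requiring any thought; everything else is a direct consequence of monotonicity of $\log(1+\cdot)$ and the submultiplicative estimate $1+|f||g|\le(1+|f|)(1+|g|)$.
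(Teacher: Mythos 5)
Your proof is correct and takes essentially the same route as the paper: parts~(b) and~(c) rest on the pointwise estimate $1+|f||g|\le(1+|f|)(1+|g|)$ and monotonicity of $\log(1+\cdot)$, exactly as in the paper's argument. For part~(a), your Bernoulli-inequality derivation merely supplies a verification of the pointwise bound $\log(1+Kt)\le\max(K,1)\log(1+t)$, which the paper states without proof.
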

\begin{proof}
For nonnegative real numbers $a$ and $b$, we have
\begin{align*}
\log(1+ab)&\le\max(b,1)\log(1+a) \\
\log(1+ab)&\le\log(1+a)+\log(1+b).
\end{align*}
The first implies~\eqref{it:Kf} and the second implies
\begin{multline*}
\|fg\|_{\log}=\int\log(1+|fg|)\,d\nu
\le\int\log(1+|f|+|g|+|fg|)\,d\nu \\
=\int\big(\log(1+|f|)+\log(1+|g|)\big)\,d\nu
=\|f\|_{\log}+\|g\|_{\log},
\end{multline*}
which proves~\eqref{it:fg}.
The assertion~\eqref{it:fgfh} is clear.
\end{proof}

\begin{prop}\label{prop:multconts}
Multiplication is continuous from $\Lc_{\log}(\Omega,\nu)\times\Lc_{\log}(\Omega,\nu)$ to $\Lc_{\log}(\Omega,\nu)$.
\end{prop}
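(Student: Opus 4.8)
The plan is to prove sequential continuity, which suffices because $\Lc_{\log}(\Omega,\nu)$ is a metric space. So I would suppose $f_n\to f_0$ and $g_n\to g_0$, set $a_n=f_n-f_0$ and $b_n=g_n-g_0$ (so that $\|a_n\|_{\log}\to0$ and $\|b_n\|_{\log}\to0$), and expand
\[
f_ng_n-f_0g_0=a_ng_0+f_0b_n+a_nb_n.
\]
Applying the subadditivity of Lemma~\ref{lem:Fnorm}\eqref{it:subadd} then gives
\[
\|f_ng_n-f_0g_0\|_{\log}\le\|a_ng_0\|_{\log}+\|f_0b_n\|_{\log}+\|a_nb_n\|_{\log},
\]
reducing the problem to showing that each of the three terms on the right tends to $0$.

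The third term is handled at once by Lemma~\ref{lem:mult}\eqref{it:fg}, since $\|a_nb_n\|_{\log}\le\|a_n\|_{\log}+\|b_n\|_{\log}\to0$. The first two terms are symmetric, so the heart of the matter is the claim that if $a_n\to0$ in $\Lc_{\log}$ and $g\in\Lc_{\log}$ is fixed, then $\|a_ng\|_{\log}\to0$. I expect this to be the main obstacle: the only product estimate available so far, Lemma~\ref{lem:mult}\eqref{it:fg}, gives merely $\|a_ng\|_{\log}\le\|a_n\|_{\log}+\|g\|_{\log}$, whose right-hand side tends to $\|g\|_{\log}$ rather than to $0$. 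To get a vanishing bound one must genuinely use that $g$ itself lies in $\Lc_{\log}$, in the form of absolute continuity of the integral $\int\log(1+|g|)\,d\nu$.

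To prove the claim I would split $\Omega$ according to the size of $g$. Given $\eps>0$, since $g$ is finite almost everywhere and $\int\log(1+|g|)\,d\nu<\infty$, the tails $\{|g|>K\}$ decrease to a null set, so by dominated convergence I may fix $K\ge1$ with $\int_{\{|g|>K\}}\log(1+|g|)\,d\nu<\eps/2$. On $\{|g|\le K\}$ the elementary inequality $\log(1+st)\le\max(t,1)\log(1+s)$ from the proof of Lemma~\ref{lem:mult} yields $\log(1+|a_n||g|)\le K\log(1+|a_n|)$, hence $\int_{\{|g|\le K\}}\log(1+|a_n||g|)\,d\nu\le K\|a_n\|_{\log}$; while on $\{|g|>K\}$ the inequality $\log(1+st)\le\log(1+s)+\log(1+t)$ bounds the contribution by $\|a_n\|_{\log}+\eps/2$. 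Adding these, $\|a_ng\|_{\log}\le(K+1)\|a_n\|_{\log}+\eps/2$, and since $K$ is now fixed while $\|a_n\|_{\log}\to0$, the right-hand side is below $\eps$ for all large $n$. As $\eps$ was arbitrary, this proves $\|a_ng\|_{\log}\to0$ and completes the argument.
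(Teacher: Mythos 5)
Your proof is correct and follows essentially the same route as the paper's: the same three-term decomposition of $f_ng_n-fg$, with the cross terms handled by truncating the fixed factor at level $K$ and using absolute continuity of $\int\log(1+|g|)\,d\nu$ on the set where the fixed factor exceeds $K$. The only cosmetic difference is that you fix $K$ in advance for a given $\eps$, while the paper takes a $\limsup$ in $n$ and then lets $K\to\infty$; the estimates are identical.
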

\begin{proof}
We must show that, if sequences $\{f_n\}_{n\ge0}$ and $\{g_n\}_{n\ge0}$ converge in $\Lc_{\log}(\Omega,\nu)$ to $f$ and $g$, respectively,
then $\{f_ng_n\}_{n\ge0}$ converges to $fg$.
Using the additivity property we have
\[
\|f_ng_n-fg\|_{\log}\le\|(f_n-f)(g_n-g)\|_{\log}+\|f(g_n-g)\|_{\log}+\|(f_n-f)g\|_{\log}.
\]
By Lemma~\ref{lem:mult}, we have
\[
\|(f_n-f)(g_n-g)\|_{\log}\le\|f_n-f\|_{\log}+\|g_n-g\|_{\log}
\]
and this upper bound tends to $0$ as $n\to\infty$.
Let $K\ge1$.
Let $E=\{x\in\Omega\mid|f(x)|>K\}$.
Then using subadditivity of $\|\cdot\|_{\log}$ and Lemma~\ref{lem:mult}, we have
\begin{align*}
\|f(g_n-g)\|_{\log}&\le\|f(g_n-g)\chi_{E^c}\|_{\log}+\|f(g_n-g)\chi_E\|_{\log} \\
&\le\|K(g_n-g)\|_{\log}+\|f\chi_E\|_{\log}+\|g_n-g\|_{\log} \\
&\le(K+1)\|g_n-g\|_{\log}+\int_{\{|f|\ge K\}}\log(1+|f|)\,d\nu,
\end{align*}
where $\chi_E$ is the characteristic function of $E$ and $\chi_{E^c}=1-\chi_E$.
Thus,
\[
\limsup_{n\to\infty}\|f(g_n-g)\|_{\log}\le\int_{\{|f|\ge K\}}\log(1+|f|)\,d\nu.
\]
But the integral on the right-hand-side tends to $0$ as $K\to\infty$.
Therefore, we also have $\limsup_{n\to\infty}\|(f_n-f)g\|_{\log}=0$.
\end{proof}

Recall that a {\em topological algebra} is an algebra over $\Cpx$ or $\Reals$, endowed with a topology
such that the algebra operations (addition, scalar multiplication and multiplication) are jointly continuous.
We have, thus, proved:
\begin{cor}
$\Lc_{\log}(\Omega,\nu)$ is a topological algebra with respect to a complete metric space topology.
\end{cor}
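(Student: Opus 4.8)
The plan is simply to assemble the pieces already in hand and check, one operation at a time, the three continuity requirements in the definition of a topological algebra, and then record completeness. First I would note that $\Lc_{\log}(\Omega,\nu)$ is genuinely an algebra over $\Cpx$: it is a vector space by the discussion preceding Lemma~\ref{lem:Fnorm}, and Lemma~\ref{lem:mult} asserts $fg\in\Lc_{\log}(\Omega,\nu)$ whenever $f,g\in\Lc_{\log}(\Omega,\nu)$, so the pointwise product is a well-defined bilinear map turning the vector space into an (associative, commutative) algebra.

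For the topological structure, joint continuity of addition and of scalar multiplication is not something I would reprove; it is exactly the content of the remark (citing \S1.2 of~\cite{KPR84}) that the F-norm $\|\cdot\|_{\log}$ of Lemma~\ref{lem:Fnorm} induces the translation-invariant metric $d_{\log}$ under which $\Lc_{\log}(\Omega,\nu)$ is a topological vector space. Joint continuity of multiplication is precisely Proposition~\ref{prop:multconts}. Hence all three algebra operations are jointly continuous for $d_{\log}$, which is the definition of a topological algebra.

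Completeness of the metric $d_{\log}$ was established in the proposition above (via the route: Cauchy in $d_{\log}$ implies Cauchy in measure, pass to an almost-everywhere convergent subsequence, then apply dominated convergence). Combining the topological-algebra property with this completeness yields the corollary, so at this stage the proof is a one-line assembly of the preceding results.

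I should emphasize that the genuine work lies entirely in the earlier statements, not here: the analytic substance is in the subadditivity and product estimates of Lemmas~\ref{lem:Fnorm} and~\ref{lem:mult}, and above all in the truncation argument of Proposition~\ref{prop:multconts} that controls $\|f(g_n-g)\|_{\log}$ by splitting on the set $\{|f|\ge K\}$ and letting $K\to\infty$. If I had to identify the single nontrivial ingredient it would be that joint continuity of multiplication, since on a non-locally-convex F-space one cannot take joint continuity of a bilinear map for granted, and the argument genuinely exploits the specific form of $\|\cdot\|_{\log}$ rather than any abstract normed-algebra estimate.
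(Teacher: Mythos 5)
Your proposal is correct and matches the paper exactly: the paper gives no separate proof, simply stating ``We have, thus, proved'' after Proposition~\ref{prop:multconts}, since the corollary is the assembly of the F-norm lemma (giving the topological vector space structure), the completeness proposition, and the joint continuity of multiplication. Your additional observation that the truncation argument in Proposition~\ref{prop:multconts} is the genuinely nontrivial ingredient is accurate and consistent with how the paper structures the material.
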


The final conclusion of the following proposition also follows from Chapter~4 of~\cite{Ma89}
and the equivalence of F-norms --- see Remark~\ref{rem:Orlicz}.
however, we provide a proof below for our F-norm $\|\cdot\|_{\log}$;
note the parallel to Propostiion~\ref{prop:NCdense} and its proof.

\begin{prop}\label{prop:L1dense}
$\Lc_1(\Omega,\nu)$ is a dense subspace of $\Lc_{\log}(\Omega,\nu)$.
Furthermore, if $X\subseteq\Lc_1(\Omega,\nu)$ is dense with respect to the usual topology induced by $\|\cdot\|_1$,
then $X$ is dense in $\Lc_{\log}(\Omega,\nu)$ with respect to the topology induced by $\|\cdot\|_{\log}$.
Thus, if $\Lc_1(\Omega,\nu)$ is separable, then $\Lc_{\log}(\Omega,\nu)$ is separable.
\end{prop}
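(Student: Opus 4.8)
The plan is to exploit the elementary inequality $\log(1+t)\le t$ for $t\ge0$, which immediately gives $\|h\|_{\log}\le\|h\|_1$ for every $h\in\Lc_1(\Omega,\nu)$; in particular $\Lc_1(\Omega,\nu)\subseteq\Lc_{\log}(\Omega,\nu)$ as a subspace and the inclusion map is continuous. With this in hand, the core of the proposition is the first density statement, after which the remaining claims follow by a short triangle-inequality argument.

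First I would establish that $\Lc_1$ is dense in $\Lc_{\log}$ by a two-sided truncation. Fix $f\in\Lc_{\log}$. The preliminary observation that makes everything work is that $\nu(\{|f|\ge\delta\})<\infty$ for every $\delta>0$, since $\log(1+\delta)\,\nu(\{|f|\ge\delta\})\le\int\log(1+|f|)\,d\nu<\infty$. I would then set $g_N=f\cdot\chi_{\{1/N\le|f|\le N\}}$; this is bounded by $N$ and supported on the finite-measure set $\{|f|\ge1/N\}$, so $g_N\in\Lc_1$. Since $f-g_N$ agrees with $f$ on $\{|f|<1/N\}\cup\{|f|>N\}$ and vanishes elsewhere,
\[
\|f-g_N\|_{\log}=\int_{\{|f|<1/N\}}\log(1+|f|)\,d\nu+\int_{\{|f|>N\}}\log(1+|f|)\,d\nu,
\]
and both integrands are dominated by $\log(1+|f|)\in\Lc_1$ and tend to $0$ pointwise as $N\to\infty$. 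The Dominated Convergence Theorem then gives $\|f-g_N\|_{\log}\to0$.

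Next I would deduce the statement for a general $\|\cdot\|_1$-dense set $X$. Given $f\in\Lc_{\log}$ and $\eps>0$, I pick $g\in\Lc_1$ with $\|f-g\|_{\log}<\eps/2$ from the previous step, then $h\in X$ with $\|g-h\|_1<\eps/2$. The opening inequality gives $\|g-h\|_{\log}\le\|g-h\|_1<\eps/2$, and subadditivity (Lemma~\ref{lem:Fnorm}\eqref{it:subadd}) yields $\|f-h\|_{\log}\le\|f-g\|_{\log}+\|g-h\|_{\log}<\eps$. The separability assertion is then immediate: if $\Lc_1$ is separable, take $X$ to be a countable $\|\cdot\|_1$-dense subset, and the preceding shows it is $\|\cdot\|_{\log}$-dense in $\Lc_{\log}$.

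The only genuinely delicate point is the first density claim, and specifically the need for a two-sided cutoff: truncating large values of $|f|$ keeps the approximant bounded, while truncating small values confines its support to a set of finite measure --- precisely what is required to remain in $\Lc_1$ when $\nu$ is not assumed finite. Everything else is bookkeeping with the triangle inequality and the pointwise bound $\log(1+t)\le t$.
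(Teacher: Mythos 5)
Your proof is correct, and it follows the paper's overall strategy: truncate $f$, check the truncation lies in $\Lc_1(\Omega,\nu)$, apply the Dominated Convergence Theorem, and then transfer $\|\cdot\|_1$-density to $\|\cdot\|_{\log}$-density via $\log(1+t)\le t$; the second half of your argument (the $\eps/2$ splitting using $\|g-h\|_{\log}\le\|g-h\|_1$ and subadditivity) is exactly the paper's. The one substantive divergence is the truncation itself. You cut on both sides, taking $g_N=f\chi_{\{1/N\le|f|\le N\}}$, and verify $g_N\in\Lc_1$ by combining boundedness with the Chebyshev-type estimate $\log(1+\delta)\,\nu(\{|f|\ge\delta\})\le\|f\|_{\log}$. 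The paper truncates only the large values, $f_M=f\chi_{\{|f|\le M\}}$, and observes that $t\le K_M\log(1+t)$ on $[0,M]$ with $K_M=M/\log(1+M)$, so that $|f_M|\le K_M\log(1+|f|)$ is integrable directly, even when $\nu(\Omega)=\infty$. Consequently your closing remark --- that the lower cutoff is ``precisely what is required'' to remain in $\Lc_1$ for infinite $\nu$ --- overstates matters: the one-sided truncation already suffices, because $\log(1+t)$ dominates a positive multiple of $t$ on any bounded interval, which is your Chebyshev estimate in disguise. Your variant costs nothing (your pointwise-limit check for the sets $\{|f|<1/N\}$ is fine, since the integrand vanishes on $\{f=0\}$) and has the mild virtue of producing approximants that are bounded with finite-measure support, hence in $\Lc_p$ for every $p\in(0,\infty)$; but the difference from the paper is cosmetic rather than essential.
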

\begin{proof}
Since $\log(1+t)\le t$ for all $t\ge0$, we have $\Lc_1(\Omega,\nu)\subseteq\Lc_{\log}(\Omega,\nu)$
and
\begin{equation}\label{eq:logL1}
\|f\|_{\log}\le\|f\|_1
\end{equation}
for all $f\in\Lc_1(\Omega,\nu)$.
Given $f\in\Lc_{\log}(\Omega,\nu)$ and $M>0$, let 
\[
f_M(x)=\begin{cases}
f(x),&|f(x)|\le M \\
0,&\text{otherwise}.
\end{cases}
\]
For each fixed $M$, there exists $K_M>0$ such that $t\le K_M\log(1+t)$ for all $t\in[0,M]$
(in fact, we may take $K_M=M/\log(1+M)$).
Thus implies
$f_M\in\Lc_1(\Omega,\nu)$. 
But
\begin{equation}\label{eq:f-f_M}
\lim_{M\to\infty}\|f-f_M\|_{\log}=\int_{\{x\in\Omega\mid|f(x)|>M\}}\log(1+|f(x)|)\,d\nu(x)=0,
\end{equation}
by the Dominated Convergence Theorem.
This proves that $\Lc_1(\Omega,\nu)$ is dense in $\Lc_{\log}(\Omega,\nu)$.

Now suppose that 
$X$ is a dense subset in $\Lc_1(\Omega,\nu)$.
Let $f\in\Lc_{\log}(\Omega,\nu)$ and let $\eps>0$.
Using~\eqref{eq:f-f_M}, choose $M$ so that $\|f-f_M\|_{\log}<\eps$.
Now choose $g\in X$ so that $\|f_M-g\|_1<\eps$.
Then using~\eqref{eq:logL1}, we have $\|f-g\|_{\log}<2\eps$.
\end{proof}

Recall that a subset $B$ of a topological vector space is {\em bounded} if, for every neighborhood $\Vc$ of $0$,
there is $N\in\Nats$ such that $B\subseteq N\Vc$.
The next results shows that $\Lc_{\log}(\Omega,\nu)$ is not locally bounded.
(An essentially more general result appears in Theorem 5.1 of~\cite{Ma89}.)
It follows that the topology on $\Lc_{\log}(\Omega,\nu)$ is not determined by a quasi-norm.
\begin{prop}
If $\Omega$ has subsets of arbitrarily small positive measure, then
no neighborhood of $0$ in $\Lc_{\log}(\Omega,\nu)$ is bounded.
\end{prop}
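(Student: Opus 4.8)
The plan is to reduce the statement to a single family of basic neighborhoods and then to exhibit explicit ``spike'' functions that witness unboundedness. Since the topology comes from the F-norm $\|\cdot\|_{\log}$, every neighborhood of $0$ contains a ball $\Vc_\delta=\{f\in\Lc_{\log}(\Omega,\nu):\|f\|_{\log}<\delta\}$ for some $\delta>0$, and any set containing an unbounded set is itself unbounded; hence it suffices to show that each such $\Vc_\delta$ fails to be bounded. To do this I would fix the particular neighborhood $\Vc:=\Vc_{\delta/4}$ and show that for every $N\in\Nats$ one has $\Vc_\delta\not\subseteq N\Vc$. Unwinding the definition, $g\in N\Vc$ precisely when $\|g/N\|_{\log}<\delta/4$, so the task reduces to producing, for each $N$, a single function $g$ with $\|g\|_{\log}<\delta$ but $\|g/N\|_{\log}\ge\delta/4$.

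The functions I would use are spikes supported on sets of tiny measure. Given $N$, use the hypothesis to choose a measurable set $A$ with $0<m:=\nu(A)$ as small as desired, and set $c:=e^{\delta/(2m)}-1$ and $g:=c\,\chi_A$. Then $\|g\|_{\log}=\int\log(1+|g|)\,d\nu=m\log(1+c)=\delta/2<\delta$, so $g\in\Vc_\delta$ no matter how small $m$ is. On the other hand $\|g/N\|_{\log}=m\log(1+c/N)$, and the whole point is that dividing a very tall spike by the fixed factor $N$ barely changes its log-norm: writing $u:=\delta/(2m)$, so that $c=e^{u}-1$ and $m=\delta/(2u)$, one has
\[
\left\|\tfrac{g}{N}\right\|_{\log}=\frac{\delta}{2u}\,\log\!\Big(1+\frac{e^{u}-1}{N}\Big)\xrightarrow[u\to\infty]{}\frac{\delta}{2}.
\]
Hence, choosing $m$ small enough (equivalently $u$ large, $c$ tall) forces $\|g/N\|_{\log}\ge\delta/4$, which places $g$ outside $N\Vc$ while keeping it inside $\Vc_\delta$. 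This gives $\Vc_\delta\not\subseteq N\Vc$ for every $N$, so $\Vc_\delta$ is unbounded, and therefore so is every neighborhood of $0$.

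The only genuine computation is the displayed limit, which rests on the elementary asymptotic $\log(1+c/N)/\log(1+c)\to1$ as $c\to\infty$ for fixed $N$; this is the precise manifestation of the failure of local boundedness, namely that scalar contraction acts like a vanishingly small perturbation on functions concentrated at large values on small sets. I do not expect a serious obstacle here: the hypothesis on arbitrarily small positive measure is exactly what allows $c$ (and hence $u$) to be taken arbitrarily large while keeping $\|g\|_{\log}$ pinned at $\delta/2$, and the remaining estimates are routine. The one point to phrase carefully is the order of the construction: one first fixes $A$ of whatever small measure the space supplies, and only then selects $c$ to normalize $\|g\|_{\log}=\delta/2$, so that no prescribed exact measure is ever required.
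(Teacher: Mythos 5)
Your proof is correct and follows essentially the same route as the paper's: both exhibit spike functions $c\,\chi_A$ supported on sets of small positive measure and exploit the slow variation $\log(1+c/N)\sim\log(1+c)$ as $c\to\infty$ to show $\Vc_\delta\not\subseteq N\Vc_{\delta/4}$ (the paper uses $\Vc_{\eps/2}$) for every $N$. Your normalization $c=e^{\delta/(2m)}-1$, fixing the set $A$ first and then tuning the height so that $\|g\|_{\log}=\delta/2$ exactly, is a minor but clean improvement over the paper's joint choice of the height $K$ and the attained measure $\eta$.
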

\begin{proof}
A neighborhood base at $0$ for $\Lc_{\log}(\Omega,\nu)$ is $\{\Vc_\eps\mid\eps>0\}$, where
\[
\Vc_\eps=\{f\in\Lc_{\log}(\Omega,\nu)\mid \|f\|_{\log}<\eps\}.
\]
We will show that for every $\eps>0$, the set $\Vc_\eps$ fails to be bounded.
Let $N\in\Nats$.
We will show $\Vc_\eps\not\subset N\Vc_{\frac\eps2}$.
Take $f=K\chi_E$ for $K>0$ and for a measurable subset $E$ of $\Omega$ to be determined later.
Let $\eta=\nu(E)$.
We have
\[
\int\log(1+|f|)\,d\nu=\eta\log(1+K),\qquad\int\log(1+\frac{|f|}N)\,d\nu=\eta\log(1+\frac KN),
\]
so we want to choose $K$ and $\eta$ so that
$\log(1+|K|)<\frac\eps\eta$ but $\log(1+\frac KN)\ge\frac\eps{2\eta}$.
There exists $K_0>0$ such that for all $K\ge K_0$, we have
\[
2\log(1+\frac KN)>\log(1+K).
\]
There exists $\eta>0$ which is attained as the measure of some $E\subset\Omega$ and such that
\[
\log(1+K)<\frac\eps\eta\le2\log(1+\frac KN)
\]
for some $K\ge K_0$.
These fulfill the requirements.
\end{proof}

A result similar to the next one appears as Theorem 5.2 in~\cite{Ma89}.
\begin{prop}
Suppose $\nu$ is diffuse and $\sigma$-finite.
Then the space $\Lc_{\log}(\Omega,\nu)$ is not locally convex.
In fact, the only open, nonempty, convex subset is $\Lc_{\log}(\Omega,\nu)$ itself.
\end{prop}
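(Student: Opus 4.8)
The plan is to prove the stronger assertion that every nonempty open convex subset $C$ equals $\Lc_{\log}(\Omega,\nu)$; non-local-convexity is then immediate, since $0$ cannot possess a base of proper convex open neighbourhoods. First I would reduce to a statement about the convex hull of a single basic neighbourhood of $0$. Given a nonempty open convex $C$, pick $f_0\in C$; because translation is a homeomorphism of the topological vector space $\Lc_{\log}(\Omega,\nu)$ and preserves openness and convexity, the set $C-f_0$ is again open, convex and nonempty, contains $0$, and equals $\Lc_{\log}(\Omega,\nu)$ if and only if $C$ does. So I may assume $0\in C$, whence $\Vc_\eps\subseteq C$ for some $\eps>0$, where $\Vc_\eps=\{f\in\Lc_{\log}(\Omega,\nu)\mid\|f\|_{\log}<\eps\}$ is the basic neighbourhood of $0$. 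By convexity $\operatorname{conv}(\Vc_\eps)\subseteq C$, so it suffices to show $\operatorname{conv}(\Vc_\eps)=\Lc_{\log}(\Omega,\nu)$ for every $\eps>0$.

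To this end I would express an arbitrary $g\in\Lc_{\log}(\Omega,\nu)$ as a finite convex combination of elements of $\Vc_\eps$. The idea is to chop $\Omega$ into $n$ pieces and spread $g$ across them after amplifying by a factor of $n$: for a measurable partition $\Omega=\bigsqcup_{i=1}^n E_i$ set $h_i=ng\chi_{E_i}$. Then $\frac1n\sum_{i=1}^n h_i=g$ is a convex combination, and since $\log(1+0)=0$,
\[
\|h_i\|_{\log}=\int_{E_i}\log(1+n|g|)\,d\nu=:\mu_n(E_i),
\]
where $\mu_n(A):=\int_A\log(1+n|g|)\,d\nu$ is a measure of total mass $T_n:=\int_\Omega\log(1+n|g|)\,d\nu$. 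By Lemma~\ref{lem:mult}\eqref{it:Kf} we have $T_n=\|ng\|_{\log}\le n\|g\|_{\log}<\infty$, so $\mu_n$ is finite. Hence I want a partition for which each $\mu_n(E_i)$ is below $\eps$, and choosing the $E_i$ to have equal $\mu_n$-measure makes every piece equal to $T_n/n$.

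The crux is therefore the estimate $T_n/n\to0$ as $n\to\infty$. Pointwise $\tfrac1n\log(1+n|g(x)|)\to0$ for each $x$, and the elementary inequality $(1+t)^n\ge 1+nt$ for $t\ge0$, $n\ge1$, gives the domination $\tfrac1n\log(1+n|g|)\le\log(1+|g|)\in\Lc_1(\Omega,\nu)$; the Dominated Convergence Theorem then yields $T_n/n\to0$. Fix $n$ with $T_n/n<\eps$. Finally, because $\nu$ is diffuse the absolutely continuous measure $\mu_n$ is atomless (here $\sigma$-finiteness ensures the Radon--Nikodym density is well defined and that Sierpi\'nski's theorem on the range of a finite atomless measure applies), so I can partition $\Omega$ into $n$ sets $E_i$ with $\mu_n(E_i)=T_n/n<\eps$. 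Then each $h_i\in\Vc_\eps$ and $g=\frac1n\sum_i h_i\in\operatorname{conv}(\Vc_\eps)$, which completes the argument. I expect the only delicate point to be this last measure-theoretic splitting into pieces of prescribed small mass; the amplify-and-spread construction together with the sublinearity estimate $T_n/n\to0$ is the heart of the matter, and it is precisely here that diffuseness is indispensable, since a $\nu$-atom cannot be subdivided and the mass $\mu_n$ assigns to it cannot be distributed into small pieces.
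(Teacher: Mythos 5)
Your proof is correct and is essentially the paper's own argument: the same reduction to showing $\operatorname{conv}(\Vc_\eps)=\Lc_{\log}(\Omega,\nu)$ for a translate containing $0$, the same amplify-and-spread decomposition $g=\frac1n\sum_i ng\chi_{E_i}$, and the same key estimate $\frac1n\|ng\|_{\log}\to0$ by dominated convergence using $\log(1+nt)\le n\log(1+t)$. The only difference is in implementation of the splitting: where the paper assumes without loss of generality that $\Omega=[0,1]$ and produces the equal-mass partition by continuity of the antiderivative, you invoke atomlessness of $A\mapsto\int_A\log(1+n|g|)\,d\nu$ and Sierpi\'nski's theorem directly, which if anything treats the general $\sigma$-finite diffuse case slightly more carefully.
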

\begin{proof}
The following argument is analogous to \S1.47 of~\cite{Ru73}.
We may without loss of generality assume $\Omega=[0,1]$.
Suppose $\Uc\subseteq\Lc_{\log}(\Omega,\nu)$ is open, convex and nonempty.
We will show $\Uc=\Lc_{\log}(\Omega,\nu)$.
Without loss of generality $0\in\Uc$ and, therefore, $\Vc_\eps\subseteq\Uc$ for some $\eps>0$.
Let $f\in\Lc_{\log}(\Omega,\nu)$.
Using the Dominated Convegence Theorem, we find
\[
\lim_{n\to\infty}\frac{\|nf\|_{\log}}n=\lim_{n\to\infty}\int\frac{\log(1+n|f|)}n\,d\nu=0.
\]
Indeed, all integers $n\ge1$ and all $x\ge0$, the inequality
\[
\log(1+nx)\le n\log(1+x)
\]
holds.
Thus for all $n\ge1$ the function $g_n=\frac{\log(1+n|f|)}n$ is dominiated by $\log(1+|f|)$, which is assumed to
be integrable, while $g_n$ tends pointwise to $0$ as $n\to\infty$.
Choose $n$ so large that $\frac{\|nf\|_{\log}}n<\eps$.
By continuity of the antiderivative, there exist $0=x_0<x_1<\cdots<x_n=1$ so that for all $j$,
\[
\int_{[x_{j-1},x_j]}\log(1+n|f|)\,d\nu=\frac{\|nf\|_{\log}}n.
\]
Let $f_j=nf\,\chi_{[x_{j-1},x_j]}$.
Then $f_j\in\Vc_\eps\subseteq\Uc$ for all $j$.
Since $\Uc$ is convex, we have
\[
f=\frac1n(f_1+\cdots+f_n)\in\Uc.
\]
So $\Uc=\Lc_{\log}(\Omega,\nu)$.
\end{proof}

\section{Relation to the Nevanlinna class}
\label{sec:Nev}

The Nevanlinna class $N$
is the set of functions $f$, holomorphic on the open unit disk, such that
\[
L(f):=\sup_{r<1}\frac1{2\pi}\int\log(1+|f(re^{i\theta})|)\,d\theta<\infty.
\]
By a theorem of F.\ and M.\ Reisz (see, e.g., Theorem 2.1 of~\cite{Du70}) $f\in N$ if and only if
$f$ is the ratio of two bounded analytic functions on the unit disk.

The function
\[
L(r,f)=\frac1{2\pi}\int\log(1+|f(re^{i\theta})|)\,d\theta
\]
is increasing in $r$, because $\log(1+|f|)$ is subharmonic.
In~\cite{SS76}, Shapiro and Shields investigated the metric space structure of $N$ with respect to the translation-invariant
metric $d_L(f,g)=L(f-g)$.
They showed that $N$ with this topology is not a topological vector space, is disconnected and, in fact,
has many linear subspaces that inherit the discrete topology.

For $f\in N$, (see Theorem~2.2 of~\cite{Du70}), nontangential limits $f(e^{i\theta}):=\lim_{r\to1^-}f(re^{i\theta})$
of $f$ exist almost everywhere at the boundary,
and the function $\theta\mapsto\log|f(e^{i\theta})|$ is integrable (except when $f$ is identically $0$).
In particular, the boundary function always belongs to $\Lc_{\log}(\Tcirc,m)$,
where $m$ is Haar measure on the unit circle,
and the map
\[
\Phi:N\to\Lc_{\log}(\Tcirc,m),
\]
that sends $f\in N$ to its boundary value function, is an injective algebra homomorphism.

By Fatou's Lemma,
\[
L(f)=\lim_{r\to1^-}\frac1{2\pi}\int_0^{2\pi}\log(1+|f(re^{i\theta})|)\,d\theta
\ge\frac1{2\pi}\int_0^{2\pi}\log(1+|f(e^{i\theta})|)\,d\theta=\|\Phi(f)\|_{\log},
\]
but equality need not hold.
In fact (see~ Proposition 1.2 of~\cite{SS76}), equality holds if and only if $f$ belongs to the Smirnov class $N^+\subset N$.

\begin{defi}
Consider the F--norm on class $N$ defined by $\|f\|_N=\|\Phi(f)\|_{\log}$.
The corresponding distance $d_N(f,g)=\|f-g\|_N$
makes $N$ into a topological algebra.
\end{defi}

\begin{ques}\label{ques:Ncomplete}
Is $N$ complete with respect to $d_N$?
Equivalently, is $\Phi(N)$ closed in $\Lc_{\log}(\Tcirc,m)$?
\end{ques}

The topological algebra
structure induced on $N$ by the metric $d_N$ seems better behaved than the topology induced by $d_L$.

\section{Noncommutative $\Lc_{\log}$ spaces}
\label{sec:expL1}

In this section we consider noncommutative analogues of the commutative $\Lc_{\log}$ spaces that were introduced in Section~\ref{sec:comm}.
Let $\Mcal\subseteq B(\HEu)$ be a von Neumann algebra possessing a normal, faithful, finite or semifinite trace $\tau$.
If $\tau$ is finite, we will assume it is normalized, namely, $\tau(1)=1$.
We will assume $\Mcal$ is diffuse, meaning that it has no minimal nonzero projections.
We let $\Lc_{\log}(\Mcal,\tau)$ be the set of all (possibly unbounded)
linear closed operators $T$ with dense domains
in $\HEu$ that are affiliated with $\Mcal$
and satisfy 
\[
\|T\|_{\log}:=\tau(\log(1+|T|))<\infty.
\]
This is a vector subspace of the set $\Sc(\Mcal,\tau)$ of all $\tau$-measurable operators $T$ affiliated with $\Mcal$, and an $\Mcal$-bimodule.

For $T\in\Sc(\Mcal,\tau)$, we let $\mu(T)$ be the generalized singular number function $x\mapsto\mu_x(T)$ for $T$ (see~\cite{FackKosaki}).
It is a nonincreasing, right continuous function from $(0,1]$ to $[0,\infty)$ if $\tau$ is finite, and from $(0,\infty)$ to $[0,\infty)$
otherwise.
Thus,
\begin{equation}\label{eq:Tlogint}
\|T\|_{\log}=\int\log(1+\mu(T))\,d\lambda,
\end{equation}
where $\lambda$ denotes Lebesgue measure and the above integral means $\int_0^1\log(1+\mu_x(T))\,dx$ if $\tau$ is finite
and $\int_0^\infty\log(1+\mu_x(T))\,dx$ if $\tau$ is infinite.

The next lemma shows that $\|\cdot\|_{\log}$ is an F-norm on $\Lc_{\log}(\Mcal,\tau)$, (see \S1.2 of~\cite{KPR84}).
\begin{lem}\label{lem:ncFnorm}
Let $S,T\in\Sc(\Mcal,\tau)$.
Then
\begin{enumerate}[(a)]
\item\label{it:Dnon0} $\|T\|_{\log}>0$ provided $T\ne0$.
\item\label{it:T*} $\|T^*\|_{\log}=\|T\|_{\log}$.
\item\label{it:Dscale} $\|\alpha T\|_{\log}\le\|T\|_{\log}$ for all scalars $\alpha$ with $|\alpha|\le1$.
\item\label{it:Dlim0}  If $T\in\Lc_{\log}(\Mcal,\tau)$, then $\lim_{\alpha\to0}\|\alpha T\|_{\log}=0$.
\item\label{it:Dsubadd} $\|S+T\|_{\log}\le\|S\|_{\log}+\|T\|_{\log}$.
\end{enumerate}
\end{lem}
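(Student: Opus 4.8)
The plan is to prove Lemma~\ref{lem:ncFnorm} by reducing each property to facts about the singular number function $\mu(T)$ via the integral representation~\eqref{eq:Tlogint}, mirroring the commutative Lemma~\ref{lem:Fnorm} but using noncommutative tools where the scalar manipulations no longer suffice directly. Properties~\eqref{it:Dnon0}--\eqref{it:Dlim0} are the easy half. For~\eqref{it:Dnon0}, since $\tau$ is faithful, $T\ne 0$ forces $\mu_x(T)>0$ on a set of positive Lebesgue measure, so $\int\log(1+\mu(T))\,d\lambda>0$. For~\eqref{it:T*}, I would invoke the standard fact that $\mu(T^*)=\mu(|T^*|)=\mu(|T|)=\mu(T)$ (singular numbers are invariant under taking adjoints, since $T$ and $T^*$ have the same singular value function). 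For~\eqref{it:Dscale}, I would use $\mu(\alpha T)=|\alpha|\mu(T)$ together with the pointwise monotonicity $\log(1+|\alpha|s)\le\log(1+s)$ when $|\alpha|\le 1$. For~\eqref{it:Dlim0}, the Dominated Convergence Theorem applies: $\log(1+|\alpha|\mu_x(T))\le\log(1+\mu_x(T))$, which is integrable since $T\in\Lc_{\log}(\Mcal,\tau)$, and the integrand tends to $0$ pointwise as $\alpha\to 0$.

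\textbf{The main obstacle} is the subadditivity~\eqref{it:Dsubadd}. In the commutative case one uses the pointwise factorization $1+|f+g|\le(1+|f|)(1+|g|)$, but in the noncommutative setting $\mu_x(S+T)$ is not pointwise dominated by any product of $\mu(S)$ and $\mu(T)$; singular numbers only satisfy a subadditivity in an integrated (majorization) sense. The correct tool is the submajorization inequality of Fack--Kosaki~\cite{FackKosaki}: for all $t>0$,
\[
\int_0^t\mu_x(S+T)\,dx\le\int_0^t\mu_x(S)\,dx+\int_0^t\mu_x(T)\,dx.
\]
The key step is then to combine this with the fact that $s\mapsto\log(1+s)$ is concave and increasing. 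Concavity and monotonicity of $\log(1+\cdot)$ mean precisely that submajorization is preserved: if $a$ submajorizes $b$ (in the above integral sense) then $\int\log(1+b)\le\int\log(1+a)$. Applying this with the submajorant being the ``superposition'' of $\mu(S)$ and $\mu(T)$ is the crux.

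\textbf{To make this precise}, I would argue as follows. By Fack--Kosaki submajorization, $\mu(S+T)$ is submajorized by the decreasing rearrangement of the function-pair obtained from $\mu(S)$ and $\mu(T)$; concretely, there is a standard lemma (again in~\cite{FackKosaki}) stating that for a concave increasing function $\varphi$ with $\varphi(0)=0$, submajorization $b\prec\prec a$ implies $\tau(\varphi(b))\le\tau(\varphi(a))$, i.e. $\int\varphi(\mu(b))\,d\lambda\le\int\varphi(\mu(a))\,d\lambda$. Taking $\varphi(s)=\log(1+s)$, which is concave, increasing, and vanishes at $0$, I would conclude
\[
\|S+T\|_{\log}=\int\log(1+\mu(S+T))\,d\lambda\le\int\log(1+\mu(S))\,d\lambda+\int\log(1+\mu(T))\,d\lambda=\|S\|_{\log}+\|T\|_{\log},
\]
where the middle inequality uses the concavity-plus-submajorization step together with the additivity $\log(1+a+b)\le\log(1+a)+\log(1+b)$ applied to the rearranged integrands. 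The one point requiring care is that $\int\log(1+\mu(S+T))\,d\lambda<\infty$, so that $S+T\in\Lc_{\log}(\Mcal,\tau)$ and the quantity is well-defined; this follows from the same inequality, establishing simultaneously that $\Lc_{\log}(\Mcal,\tau)$ is closed under addition and that the F-norm is subadditive.
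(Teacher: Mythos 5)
Parts \eqref{it:Dnon0}--\eqref{it:Dlim0} of your proposal are correct and coincide with the paper's treatment: the paper disposes of them in one line using $\mu(\alpha T)=|\alpha|\mu(T)$ and $\mu(T^*)=\mu(T)$, exactly as you do. The problem is your proof of the subadditivity \eqref{it:Dsubadd}, which contains a genuine gap. The ``standard lemma'' you attribute to \cite{FackKosaki} --- that submajorization $b\prec\prec a$ implies $\int\varphi(\mu(b))\,d\lambda\le\int\varphi(\mu(a))\,d\lambda$ for $\varphi$ concave, increasing, with $\varphi(0)=0$ --- is false, and appears nowhere in that paper. Submajorization is preserved by \emph{convex} increasing functions vanishing at $0$ (Hardy--Littlewood--P\'olya); for concave functions the comparison reverses. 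A counterexample with your $\varphi(s)=\log(1+s)$: take nonincreasing functions $a=2\chi_{(0,1]}$ and $b=\chi_{(0,2]}$. Then $\int_0^t b\,d\lambda=\min(t,2)\le\min(2t,2)=\int_0^t a\,d\lambda$ for all $t$, so $b\prec\prec a$, yet
\[
\int\log(1+b)\,d\lambda=2\log 2=\log 4>\log 3=\int\log(1+a)\,d\lambda.
\]
(Such $a,b$ occur as singular value functions of positive operators in any diffuse $(\Mcal,\tau)$.) Consequently the crux step of your argument --- passing from the Fack--Kosaki submajorization $\int_0^t\mu(S+T)\le\int_0^t\mu(S)+\int_0^t\mu(T)$ to the integrated $\log(1+\cdot)$ inequality --- does not follow, and part \eqref{it:Dsubadd} is unproved.

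What is true, and what the paper uses with a one-line citation, is precisely Theorem 4.7(i) of \cite{FackKosaki}: a Rotfel'd-type inequality asserting that for any increasing concave $g$ on $[0,\infty)$ with $g(0)=0$ one has $\tau(g(|S+T|))\le\tau(g(|S|))+\tau(g(|T|))$; applying it with $g(x)=\log(1+x)$ and using \eqref{eq:Tlogint} gives \eqref{it:Dsubadd} directly (and simultaneously shows $S+T\in\Lc_{\log}(\Mcal,\tau)$, as you note). The point is that this concave triangle inequality is \emph{not} a corollary of the submajorization $\mu(S+T)\prec\prec\mu(S)+\mu(T)$; its proof in \cite{FackKosaki} proceeds by a genuinely different route (reduction to functions of the form $x\mapsto\min(x,c)$, using $\mu_{s+t}(S+T)\le\mu_s(S)+\mu_t(T)$, rather than integrated majorization). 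So to repair your proof, replace the false lemma by a direct citation of Theorem 4.7(i) --- after which your closing scalar inequality $\log(1+a+b)\le\log(1+a)+\log(1+b)$ is not even needed, since the cited theorem already produces the sum on the right-hand side.
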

\begin{proof}
Using $\mu(\alpha T)=\alpha\mu(T)$ for all $\alpha>0$ and
$\mu(T^*)=\mu(T)$,
we easily prove \eqref{it:Dnon0}-\eqref{it:Dlim0}.

The triangle inequality~\eqref{it:Dsubadd} follows from Theorem 4.7(i) of~\cite{FackKosaki},
with $g(x)=\log(1+x)$, where we use~\eqref{eq:Tlogint}.
\end{proof}

Since $\|\cdot\|_{\log}$ is an F-norm on $\Lc_{\log}(\Mcal,\tau)$,
it provides a translation invariant metric
\[ 
d_{\log}(S,T)=\|S-T\|_{\log}
\] 
making it into a topological vector space
(see Section 1.2 of~\cite{KPR84}).
Henceforth, we regard $\Lc_{\log}(\Mcal,\tau)$ as endowed with this topology.

\begin{rem}\label{rem:D}
If $\Dc$ is an abelian von Neumann subalgebra of $\Mcal$, and $\Dc\cong L^\infty(\Omega,\nu)$
for a measure space $(\Omega,\nu)$, where $\tau\restrict_\Dc$ is given by integration against $\nu$,
then space consisting those $T\in\Lc_{\log}(\Mcal,\tau)$ that are affiliated to $\Dc$
is naturally identified with $\Lc_{\log}(\Omega,\nu)$ and the two definitions of $\|\cdot\|_{\log}$ coincide.
\end{rem}

We now examine multiplication in $\Lc_{\log}(\Mcal,\tau)$.

\begin{lem}\label{lem:NCmult}
Let $S,T\in\Lc_{\log}(\Mcal,\tau)$ and let $K$ be a positive real number.
Then $ST\in\Lc_{\log}(\Mcal,\tau)$ and
\begin{enumerate}[(a)]
\item\label{it:ST} $\|ST\|_{\log}\le\|S\|_{\log}+\|T\|_{\log}$.
\item\label{it:KS} Furthermore, if $S$ is bounded, then $\|ST\|_{\log}\le\max(\|S\|,1)\|T\|_{\log}$.
\end{enumerate}
\end{lem}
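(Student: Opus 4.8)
The plan is to transport to the operator setting the two scalar inequalities that drive the commutative Lemma~\ref{lem:mult}: for $x,y,b\ge0$,
\[
\log(1+xy)\le\log(1+x)+\log(1+y),\qquad\log(1+bx)\le\max(b,1)\log(1+x),
\]
using the generalized singular number function $\mu$ together with the integral formula~\eqref{eq:Tlogint}. I would dispose of the bounded case~\eqref{it:KS} first, since it is the easy one. When $S$ is bounded, the Fack--Kosaki estimate $\mu_t(ST)\le\|S\|\,\mu_t(T)$ holds for every $t$ (see~\cite{FackKosaki}); feeding this into the second scalar inequality with $b=\|S\|$ and $x=\mu_t(T)$, and then integrating in $t$ via~\eqref{eq:Tlogint}, gives
\[
\|ST\|_{\log}=\int\log(1+\mu(ST))\,d\lambda\le\max(\|S\|,1)\int\log(1+\mu(T))\,d\lambda=\max(\|S\|,1)\|T\|_{\log}.
\]
In particular this already shows $ST\in\Lc_{\log}(\Mcal,\tau)$ whenever $S$ is bounded.

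For the general estimate~\eqref{it:ST} the essential input is the multiplicative, Horn-type logarithmic submajorization of the singular numbers, $\mu(ST)\prec\prec_{\log}\mu(S)\mu(T)$, where $\mu(S)\mu(T)$ denotes the pointwise product $t\mapsto\mu_t(S)\mu_t(T)$; concretely,
\[
\int_0^s\log\mu_t(ST)\,dt\le\int_0^s\bigl(\log\mu_t(S)+\log\mu_t(T)\bigr)\,dt\qquad(s>0),
\]
a standard property of generalized singular numbers. Now set $F(r)=\log(1+e^r)$, which is increasing and convex, and observe that $\log(1+\mu_t(ST))=F(\log\mu_t(ST))$ while $\log(1+\mu_t(S)\mu_t(T))=F(\log\mu_t(S)+\log\mu_t(T))$. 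Because submajorization is preserved under composition with an increasing convex function, the displayed inequality upgrades, after letting $s\to\infty$, to
\[
\int\log(1+\mu(ST))\,d\lambda\le\int\log\bigl(1+\mu(S)\mu(T)\bigr)\,d\lambda.
\]
Applying the first scalar inequality pointwise with $x=\mu_t(S)$ and $y=\mu_t(T)$ and integrating bounds the right-hand side by $\|S\|_{\log}+\|T\|_{\log}$, which proves~\eqref{it:ST} and, the bound being finite, confirms $ST\in\Lc_{\log}(\Mcal,\tau)$.

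The step I expect to be the main obstacle is the passage from the logarithmic submajorization to the integral inequality, not the scalar estimates, which are literally those of Lemma~\ref{lem:mult}. Some care is needed because $\log\mu_t$ equals $-\infty$ wherever $\mu_t$ vanishes, and because in the semifinite case the integrals in~\eqref{eq:Tlogint} run over all of $(0,\infty)$, so both the endpoint behaviour and convergence must be controlled; the natural remedy is to run the convexity argument with $\mu_t(S)+\eps$ and $\mu_t(T)+\eps$ in place of $\mu_t(S)$ and $\mu_t(T)$ and then let $\eps\downarrow0$, obtaining the full integral inequality as the supremum over $s$ of its finite-interval versions. One should also verify that the argument is uniform across the finite and semifinite cases, the two ranges of integration in~\eqref{eq:Tlogint} being the only point of difference.
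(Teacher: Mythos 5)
Your proposal is correct and is essentially the paper's argument: the paper handles part (a) by citing Theorem 4.2(iii) of \cite{FackKosaki} with $f(x)=\log(1+x)$ (noting $t\mapsto\log(1+e^t)$ is convex), which is precisely the package you unpack by hand --- logarithmic submajorization $\mu(ST)\prec\prec_{\log}\mu(S)\mu(T)$ followed by composition with the increasing convex function $F(r)=\log(1+e^r)$ --- and then applies the same scalar inequality $\log(1+xy)\le\log(1+x)+\log(1+y)$ pointwise. Part (b) is treated identically in both, via $\mu(ST)\le\|S\|\mu(T)$ and $\log(1+bx)\le\max(b,1)\log(1+x)$, so the only difference is that you re-derive the cited Fack--Kosaki result (with appropriate care about $\log\mu_t=-\infty$) rather than quoting it.
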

\begin{proof}
For~\eqref{it:ST}, we use Theorem 4.2(iii) of~\cite{FackKosaki} with the function $f(x)=\log(1+x)$
(since we easily see that $t\mapsto\log(1+e^t)$ is convex on $\Reals$).
Thus, we get
\[
\|ST\|_{\log}=\int \log(1+\mu(ST))\,d\lambda\le
\int \log(1+\mu(S)\mu(T))\,d\lambda\le\|S\|_{\log}+\|T\|_{\log},
\]
where the last estimate follows as in the proof of Lemma~\ref{lem:mult}\eqref{it:fg}.
This proves~\eqref{it:ST}

To prove~\eqref{it:KS},
if $S$ is bounded, then we have $\mu(ST_n)\le\|S\|\mu(T_n)$.
Thus,
\begin{multline*}
\|ST\|_{\log}\le\int\log(1+\|S\|\mu(T))\,d\lambda\le\max(\|S\|,1)\int\log(1+\mu(T))\,d\lambda \\
=\max(\|S\|,1)\|T\|_{\log}.
\end{multline*}
\end{proof}

\begin{lem}\label{lem:onemultconts}
Let $S\in\Lc_{\log}(\Mcal,\tau)$ and suppose $T_n$ is a sequence in $\Lc_{\log}(\Mcal,\tau)$ that converges to $0$.
Then $ST_n\to0$ and $T_nS\to0$.
\end{lem}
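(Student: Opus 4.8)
The plan is to transfer the bounded-plus-small splitting used for the commutative case (Proposition~\ref{prop:multconts}) to the level of spectral projections. I work throughout inside the algebra $\Sc(\Mcal,\tau)$ of $\tau$-measurable operators, where sums and products are unambiguously defined. Fix $S$ and take its polar decomposition $S=U|S|$. For $K\ge1$ let $e_K=\chi_{[0,K]}(|S|)$ be the spectral projection of $|S|$, and write $S=S_1+S_2$ with $S_1=U|S|e_K$ and $S_2=U|S|e_K^\perp$. Then $S_1$ is bounded with $\|S_1\|\le K$, and since passing to a spectral cut-off and multiplying by the contraction $U$ only decreases the singular number function, both $S_1$ and $S_2$ satisfy $\mu_t(S_i)\le\mu_t(S)$; in particular $S_1,S_2\in\Lc_{\log}(\Mcal,\tau)$ by~\eqref{eq:Tlogint}.

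Using $ST_n=S_1T_n+S_2T_n$ together with subadditivity (Lemma~\ref{lem:ncFnorm}\eqref{it:Dsubadd}) and the two estimates of Lemma~\ref{lem:NCmult}, I would bound
\[
\|ST_n\|_{\log}\le\|S_1T_n\|_{\log}+\|S_2T_n\|_{\log}
\le\max(K,1)\,\|T_n\|_{\log}+\|S_2\|_{\log}+\|T_n\|_{\log},
\]
where $\|S_1T_n\|_{\log}\le\max(K,1)\|T_n\|_{\log}$ comes from Lemma~\ref{lem:NCmult}\eqref{it:KS} (as $\|S_1\|\le K$) and $\|S_2T_n\|_{\log}\le\|S_2\|_{\log}+\|T_n\|_{\log}$ from Lemma~\ref{lem:NCmult}\eqref{it:ST}. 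Since $\|T_n\|_{\log}\to0$, taking $\limsup_{n\to\infty}$ yields
\[
\limsup_{n\to\infty}\|ST_n\|_{\log}\le\|S_2\|_{\log}.
\]

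It remains to show the right-hand side tends to $0$ as $K\to\infty$. From $\mu_t(S_2)\le\mu_t(|S|e_K^\perp)$ and the identity $\mu_t(|S|e_K^\perp)=\mu_t(S)\,\chi_{\{\mu_t(S)>K\}}$ for the cut-off of the positive operator $|S|$, formula~\eqref{eq:Tlogint} gives
\[
\|S_2\|_{\log}\le\int_{\{t\,:\,\mu_t(S)>K\}}\log(1+\mu_t(S))\,d\lambda(t).
\]
This is the tail of the integrable function $t\mapsto\log(1+\mu_t(S))$ over a set that shrinks to a $\lambda$-null set as $K\to\infty$ (here $\tau$-measurability of $S$ ensures $\mu_t(S)<\infty$ for $t>0$), so it tends to $0$ by dominated convergence. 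As the left-hand member of the previous display does not depend on $K$, this forces $\limsup_{n\to\infty}\|ST_n\|_{\log}=0$, i.e.\ $ST_n\to0$. The statement $T_nS\to0$ then follows by passing to adjoints: $\|T_nS\|_{\log}=\|S^*T_n^*\|_{\log}$ and $\|T_n^*\|_{\log}=\|T_n\|_{\log}$ by Lemma~\ref{lem:ncFnorm}\eqref{it:T*}, and $S^*\in\Lc_{\log}(\Mcal,\tau)$, so the already-established case applies to $S^*$ and the null sequence $T_n^*$. I expect the main obstacle to be the singular-number bookkeeping in the third step --- justifying $\mu_t(S_2)\le\mu_t(|S|e_K^\perp)$ coming from multiplication by the partial isometry $U$, and the explicit description of $\mu_t(|S|e_K^\perp)$ as the restriction of $\mu_t(S)$ to $\{\mu_t(S)>K\}$. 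Both are instances of the Fack--Kosaki calculus of generalized singular numbers already used in Lemmas~\ref{lem:ncFnorm} and~\ref{lem:NCmult}, so no new machinery should be needed.
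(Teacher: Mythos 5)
Your proof is correct and is essentially the paper's own argument: your $S_2=U|S|e_K^\perp$ is exactly the paper's $S_K=SE_{|S|}((K,\infty))$, and both proofs bound $\limsup_{n\to\infty}\|ST_n\|_{\log}\le\|S_K\|_{\log}$ via the two parts of Lemma~\ref{lem:NCmult}, let $K\to\infty$ through the tail of the integrable function $\log(1+\mu(S))$, and dispose of $T_nS$ by taking adjoints. The only difference is cosmetic: you spell out the singular-value identity $\mu(|S|e_K^\perp)=\mu(S)\chi_{\{\mu(S)>K\}}$ that the paper asserts without comment.
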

\begin{proof}
Since $\|T_nS\|_{\log}=\|S^*T_n^*\|_{\log}$, we need only show $\|ST_n\|_{\log}\to0$.
If $S$ is bounded, then this follows from Lemma~\ref{lem:NCmult}.
Now for general $S\in\Lc_{\log}(\Mcal,\tau)$, take $0<K<\infty$ and let $S_K=SE_{|S|}((K,\infty))$,
where $E_{|S|}$ denotes the projection-valued spectral measure of $|S|$.
Then $S-S_K=SE_{|S|}([0,K])$ is bounded.
Using Lemmas~\ref{lem:ncFnorm} and~\ref{lem:NCmult}, we have
\begin{multline*}
\limsup_{n\to\infty}\|ST_n\|_{\log}\le\limsup_{n\to\infty}\|S_KT_n\|_{\log}+\|(S-S_K)T_n\|_{\log} \\
=\limsup_{n\to\infty}\|S_KT_n\|_{\log}
\le\limsup_{n\to\infty}(\|S_K\|_{\log}+\|T_n\|_{\log})=\|S_K\|_{\log}.
\end{multline*}
But
\[
\|S_K\|_{\log}=\int_{\{x\mid\mu_x(S)>K\}}\log(1+\mu(S))\,d\lambda
\]
tends to $0$ as $K\to\infty$.
Thus, $\|ST_n\|\to0$ as $n\to\infty$.
\end{proof}

\begin{prop}\label{prop:NCmultconts}
Multiplication is continuous from $\Lc_{\log}(\Mcal,\tau)\times\Lc_{\log}(\Mcal,\tau)$ to $\Lc_{\log}(\Mcal,\tau)$.
\end{prop}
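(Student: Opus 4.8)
The plan is to mimic the commutative case, Proposition~\ref{prop:multconts}, reducing everything to the one-sided continuity already established in Lemma~\ref{lem:onemultconts}. Since $d_{\log}$ is a translation-invariant metric, it suffices to verify sequential continuity: I would suppose $S_n\to S$ and $T_n\to T$ in $\Lc_{\log}(\Mcal,\tau)$ and show that $S_nT_n\to ST$. First I would record the algebraic identity
\[
S_nT_n-ST=(S_n-S)(T_n-T)+S(T_n-T)+(S_n-S)T,
\]
which makes sense in $\Lc_{\log}(\Mcal,\tau)$ because this space is closed under multiplication by Lemma~\ref{lem:NCmult}. Applying subadditivity of $\|\cdot\|_{\log}$ (Lemma~\ref{lem:ncFnorm}\eqref{it:Dsubadd}) then splits the problem into three terms.

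Next I would bound the quadratic term by Lemma~\ref{lem:NCmult}\eqref{it:ST},
\[
\|(S_n-S)(T_n-T)\|_{\log}\le\|S_n-S\|_{\log}+\|T_n-T\|_{\log},
\]
whose right-hand side tends to $0$ by hypothesis. The two remaining cross terms are exactly of the form covered by Lemma~\ref{lem:onemultconts}: taking the fixed operator $S$ together with the null sequence $T_n-T$ gives $\|S(T_n-T)\|_{\log}\to0$, while taking the fixed operator $T$ together with the null sequence $S_n-S$ gives $\|(S_n-S)T\|_{\log}\to0$. Combining the three estimates yields $\|S_nT_n-ST\|_{\log}\to0$, as required.

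I expect no genuine obstacle at this stage, because the real difficulty has already been absorbed into Lemma~\ref{lem:onemultconts}---namely the spectral-truncation argument that controls $\|S(T_n-T)\|_{\log}$ for an \emph{unbounded} fixed operator $S$ by splitting off the large part $SE_{|S|}((K,\infty))$ and letting $K\to\infty$. The only point to check here is that the quadratic error term does not spoil the estimate, and this is immediate from the additivity bound of Lemma~\ref{lem:NCmult}\eqref{it:ST}; that bound involves no commutative subtlety, since it rests on the singular-number estimates of~\cite{FackKosaki} together with the elementary inequality $\log(1+ab)\le\log(1+a)+\log(1+b)$, exactly as in Lemma~\ref{lem:mult}\eqref{it:fg}. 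Thus the argument should be essentially as short as its commutative counterpart, the parallel noted just before Proposition~\ref{prop:L1dense}.
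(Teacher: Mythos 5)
Your proposal is correct and follows essentially the same route as the paper's own proof: the identical three-term decomposition of $S_nT_n-ST$, with the quadratic term bounded via Lemma~\ref{lem:NCmult}\eqref{it:ST} and both cross terms handled by Lemma~\ref{lem:onemultconts} (which, as you note, covers $(S_n-S)T$ as well as $S(T_n-T)$, since it treats multiplication on either side). You have also correctly identified that the real work lives in the spectral-truncation argument of Lemma~\ref{lem:onemultconts}, so nothing further is needed here.
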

\begin{proof}
Suppose $S_n$ and $T_n$ are sequences in $\Lc_{\log}(\Mcal,\tau)$ and $S_n\to S$ and $T_n\to T$.
We must show $S_nT_n\to ST$.
But using Lemmas~\ref{lem:ncFnorm} and~\ref{lem:NCmult}, we have
\begin{multline*}
\|S_nT_n-ST\|_{\log}
\le\|(S_n-S)(T_n-T)\|_{\log}+\|S(T_n-T)\|_{\log}+\|(S_n-S)T\|_{\log} \\
\le\|(S_n-S)\|_{\log}+\|(T_n-T)\|_{\log}+\|S(T_n-T)\|_{\log}+\|(S_n-S)T\|_{\log}.
\end{multline*}
From Lemma~\ref{lem:onemultconts}, we conclude
$\lim_{n\to\infty}\|S_nT_n-ST\|_{\log}=0$.
\end{proof}

\begin{cor}\label{cor:top*alg}
$\Lc_{\log}(\Mcal,\tau)$ is a topological $*$-algebra.
\end{cor}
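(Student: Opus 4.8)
The plan is to collect the facts already proved and to supply the single missing ingredient, continuity of the involution. By the definition recalled earlier, a topological $*$-algebra is a $*$-algebra carrying a topology for which addition, scalar multiplication, multiplication and the adjoint operation $T\mapsto T^*$ are all (jointly, where applicable) continuous; so I would verify these items in turn.

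First I would confirm that $\Lc_{\log}(\Mcal,\tau)$ is a $*$-algebra. It is a vector subspace of $\Sc(\Mcal,\tau)$ (noted at its definition), it is closed under multiplication by Lemma~\ref{lem:NCmult}, and it is closed under the adjoint operation since $\|T^*\|_{\log}=\|T\|_{\log}$ by Lemma~\ref{lem:ncFnorm}\eqref{it:T*}, whence $T^*\in\Lc_{\log}(\Mcal,\tau)$ whenever $T\in\Lc_{\log}(\Mcal,\tau)$.

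Next, because $\|\cdot\|_{\log}$ is an F-norm (Lemma~\ref{lem:ncFnorm}), the induced translation-invariant metric $d_{\log}$ makes $\Lc_{\log}(\Mcal,\tau)$ a topological vector space in the standard way (\S1.2 of~\cite{KPR84}); in particular addition and scalar multiplication are jointly continuous. Joint continuity of multiplication is precisely Proposition~\ref{prop:NCmultconts}. For the involution I would observe that it is in fact an isometry for $d_{\log}$: using additivity of the adjoint together with Lemma~\ref{lem:ncFnorm}\eqref{it:T*},
\[
d_{\log}(S^*,T^*)=\|S^*-T^*\|_{\log}=\|(S-T)^*\|_{\log}=\|S-T\|_{\log}=d_{\log}(S,T),
\]
so $T\mapsto T^*$ is continuous.

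Since this is a corollary, the substantive work has already been carried out in the preceding lemmas and in Proposition~\ref{prop:NCmultconts}; the only genuinely new step is the last one, and it is immediate from the $*$-invariance of the F-norm. I therefore do not anticipate any real obstacle here — the task is simply to record that the four required continuity statements have all been established.
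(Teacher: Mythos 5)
Your proof is correct and follows essentially the same route as the paper, which likewise deduces the corollary from Lemma~\ref{lem:ncFnorm}, Proposition~\ref{prop:NCmultconts}, and the $*$-invariance $\|T^*\|_{\log}=\|T\|_{\log}$; you have merely spelled out the details (in particular the isometry argument $\|S^*-T^*\|_{\log}=\|(S-T)^*\|_{\log}=\|S-T\|_{\log}$ for continuity of the involution) that the paper leaves as immediate.
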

\begin{proof}
This is an immediate consequence of Lemma~\ref{lem:ncFnorm}, Proposition~\ref{prop:NCmultconts} and
the obvious fact that $\|T^*\|_{\log}=\|T\|_{\log}$.
\end{proof}

\begin{prop}\label{prop:NCdense}
$\Lc_1(\Mcal,\tau)$ is dense in $\Lc_{\log}(\Mcal,\tau)$.
If $X$ is any subset of $\Lc_1(\Mcal,\tau)$
that is dense in $\Lc_1(\Mcal,\tau)$ with respect to the usual norm $\|\cdot\|_1$, then $X$ is dense in $\Lc_{\log}(\Mcal,\tau)$.
Consequently, if $\Mcal$ has separable predual, then $\Lc_{\log}(\Mcal,\tau)$ is separable.
\end{prop}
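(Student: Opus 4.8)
The plan is to mirror the proof of the commutative Proposition~\ref{prop:L1dense}, replacing functions by their singular value functions via the identity~\eqref{eq:Tlogint} and replacing the truncation $f\mapsto f_M$ by a spectral truncation. First I would record the analogue of the inequality $\|f\|_{\log}\le\|f\|_1$: since $\log(1+t)\le t$ for all $t\ge0$, applying this pointwise to $\mu(T)$ and integrating against $\lambda$ gives $\Lc_1(\Mcal,\tau)\subseteq\Lc_{\log}(\Mcal,\tau)$ together with $\|T\|_{\log}\le\|T\|_1$ for every $T\in\Lc_1(\Mcal,\tau)$.

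Next, for $T\in\Lc_{\log}(\Mcal,\tau)$ and $K>0$ I would set $T_K=TE_{|T|}([0,K])$, where $E_{|T|}$ is the spectral measure of $|T|$; this is the exact operator analogue of the truncated function $f_M$. Since $E_{|T|}([0,K])$ is a projection, one has $\mu(T_K)\le\mu(T)$, so $T_K\in\Lc_{\log}(\Mcal,\tau)$ with $\|T_K\|_{\log}\le\|T\|_{\log}$; moreover $\mu(T_K)\le K$ everywhere. Taking $K_M=M/\log(1+M)$ with $M=K$, the elementary inequality $t\le K_M\log(1+t)$, valid on $[0,K]$, yields after integration
\[
\|T_K\|_1=\int\mu(T_K)\,d\lambda\le K_M\int\log(1+\mu(T_K))\,d\lambda=K_M\|T_K\|_{\log}<\infty,
\]
so that $T_K\in\Lc_1(\Mcal,\tau)$. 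To see that $T_K$ approximates $T$, note that $T-T_K=TE_{|T|}((K,\infty))$ is precisely the operator denoted $S_K$ (with $S=T$) in the proof of Lemma~\ref{lem:onemultconts}; that computation already shows
\[
\|T-T_K\|_{\log}=\int_{\{x\mid\mu_x(T)>K\}}\log(1+\mu(T))\,d\lambda,
\]
which tends to $0$ as $K\to\infty$ by the Dominated Convergence Theorem, the integrand being dominated by the integrable function $\log(1+\mu(T))$ and vanishing pointwise in the limit. This establishes density of $\Lc_1(\Mcal,\tau)$ in $\Lc_{\log}(\Mcal,\tau)$.

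For the second assertion, given $T\in\Lc_{\log}(\Mcal,\tau)$ and $\eps>0$, I would first choose $K$ with $\|T-T_K\|_{\log}<\eps$, then use $\|\cdot\|_1$-density of $X$ to pick $R\in X$ with $\|T_K-R\|_1<\eps$; subadditivity (Lemma~\ref{lem:ncFnorm}\eqref{it:Dsubadd}) together with $\|T_K-R\|_{\log}\le\|T_K-R\|_1$ then gives $\|T-R\|_{\log}<2\eps$. The final separability statement follows because, when $\Mcal$ has separable predual, $\Lc_1(\Mcal,\tau)$ is separable, so it admits a countable $\|\cdot\|_1$-dense subset $X$, which by the above is $\|\cdot\|_{\log}$-dense in $\Lc_{\log}(\Mcal,\tau)$.

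I expect the only genuine subtlety to be the verification that $T_K\in\Lc_1(\Mcal,\tau)$ in the semifinite case: boundedness of $T_K$ alone does not force integrability of $\mu(T_K)$, and it is the combination of the uniform bound $\mu(T_K)\le K$ with the finiteness of $\|T_K\|_{\log}$, mediated by $t\le K_M\log(1+t)$, that does the work. Everything else is a direct transcription of the commutative argument, with the computation of $\|T-T_K\|_{\log}$ already supplied by Lemma~\ref{lem:onemultconts} and the separability of $\Lc_1(\Mcal,\tau)$ imported as a standard fact about noncommutative $L_1$-spaces.
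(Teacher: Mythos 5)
Your proof is correct, and it takes a slightly different route from the paper's. The paper reduces to the commutative case: it writes $T=U|T|$ in polar decomposition, applies the commutative density result (Proposition~\ref{prop:L1dense}, via the identification in Remark~\ref{rem:D}) to $|T|$ inside the abelian algebra it generates to get $R\in\Lc_1(\Mcal,\tau)$ with $\||T|-R\|_{\log}$ small, and then uses Lemma~\ref{lem:NCmult}\eqref{it:KS} with the contraction $U$ to conclude $\|T-UR\|_{\log}\le\||T|-R\|_{\log}$. You instead truncate $T$ directly by the spectral projection, $T_K=TE_{|T|}([0,K])$, and run the singular-value computation in the noncommutative setting; since $T_K=U\,|T|E_{|T|}([0,K])$, your truncation is in substance the same as the paper's composite $UR$ with $R$ the commutative truncation of $|T|$, so the two arguments share the same skeleton ($\|\cdot\|_{\log}\le\|\cdot\|_1$ from $\log(1+t)\le t$, truncation at level $K$, dominated convergence for the tail). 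What your version buys is self-containedness: it never invokes the polar decomposition or the commutative proposition, reusing instead the tail computation already displayed in the proof of Lemma~\ref{lem:onemultconts}, and it makes fully explicit the one point the paper leaves implicit (and states somewhat elliptically), namely why $T_K\in\Lc_1(\Mcal,\tau)$ in the semifinite case --- your observation that boundedness alone does not suffice and that one needs $\mu(T_K)\le K$ combined with $t\le K_M\log(1+t)$ on $[0,K]$ is exactly right, and is the same mechanism hidden inside the commutative proof the paper appeals to. The paper's route is shorter given that Proposition~\ref{prop:L1dense} is already proved and emphasizes the commutative--noncommutative parallel; yours is marginally longer but independent of Section~\ref{sec:comm}. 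The remaining steps (the $2\eps$-argument for a dense $X\subseteq\Lc_1$, and separability via the identification of the predual of $\Mcal$ with $\Lc_1(\Mcal,\tau)$) coincide with the paper's.
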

\begin{proof}
Let $T\in\Lc_{\log}(\Mcal,\tau)$.
Let $T=U|T|$ be the polar decomposition of $T$.
By Lemma~\ref{lem:NCmult}, 
Using Remark~\ref{rem:D} and Proposition~\ref{prop:L1dense}, (or using the proof of the latter)
we find that $\||T|-R\|_{\log}$ can be made arbitrarly small for $R\in\Lc_1(\Mcal,\tau)$.
Using Lemma~\ref{lem:NCmult}\eqref{it:KS}, we have $\|T-UR\|_{\log}\le\||T|-R\|_{\log}$, and $UR\in\Lc_1(\Mcal,\tau)$.
This shows that $\Lc_1(\Mcal,\tau)$ is dense in $\Lc_{\log}(\Mcal,\tau)$.

We have also here $\|T\|_{\log}\le\|T\|_1$, so density of $X$ on $\Lc_1(\Mcal,\tau)$ implies density in $\Lc_{\log}(\Mcal,\tau)$.

As is well known,
(see, for example, Appendix B.5 of~\cite{SiSm}),
the predual of $\Mcal$ is $\Lc^1(\Mcal,\tau)$,
from which the last statement of the lemma follows.
\end{proof}

Recall that, on $\Sc(\Mcal,\tau)$,
the measure topology of E.\ Nelson~\cite{Ne74}
is translation invariant and has as neighborhood base at $0$ the set $\{N_{\eta,\delta}\mid \eta,\delta>0\}$,
where
\[
N_{\eta,\delta}=\{A\in\Sc(\Mcal,\tau)\mid \tau(E_{|A|}([\delta,\infty)))<\eta\}.
\]
This topology is metrizable
for example, by the metric
\begin{equation}\label{eq:dtau}
d_\tau(A,B)=\sum_{n=1}^\infty 2^{-n}\tau(E_{|A-B|}([n^{-1},\infty))).
\end{equation}
and a sequence $\{A_n\}_{n\ge0}$ in $\Sc(\Mcal,\tau)$ converges in measure to $A$
if and only if
$$\lim_{n\to\infty}\tau(E_{|A_n-A|}([\delta,\infty)))=0 ,\qquad(\forall \delta>0).$$
When $\Mcal$ can be represented on a separable Hilbert space,
it is also separable.
Indeed, it is a classical result that in this case, the unit ball of the algebra $B(\mathcal{H})$ is separable in the $(so^*)$-topology and therefore, the unit ball of the algebra $\Mcal$  is also separable in that topology. It is easy to see that the latter topology reduced to the set of all projections $\mathcal{P}(\Mcal)$ coincides with the measure topology on that set. Finally, it is easy to see that the space $\Sc(\Mcal,\tau)$ is separable in the measure topology if and only if the set $\mathcal{P}(\Mcal)$ is separable in the measure topology.
For further details, we refer to~\cite{S85}.
Furthermore, the set of positive elements is closed in this topology (see~\cite{DDP}).

\begin{rem}\label{rem:as strong}
Using
\begin{equation}\label{eq:logchar}
\tau(E_{|A|}([\delta,\infty)))\le\tau(1+3\delta^{-1}|A|)=\|3\delta^{-1}A\|_{\log},
\end{equation}
we immediately see that
the $\|\cdot\|_{\log}$-topology on $\Lc_{\log}(\Mcal,\tau)$
is at least as strong as the relative measure topology on $\Lc_{\log}(\Mcal,\tau)$
(and it is not difficult to show it is strictly stronger).
Thus, we have that
\[
\{X\in\Lc_{\log}(\Mcal,\tau)\mid X\ge0\}
\]
is closed with respect to $\|\cdot\|_{\log}$.
\end{rem}

In the remainder of this paper, we show that $\Lc_{\log}(\Mcal,\tau)$ is complete,
as an application of Nelson's analogous result~\cite{Ne74} for the notion of Cauchy in measure.
\begin{thm}\label{logl1 topol alg}
$\Lc_{\log}(\mathcal{M},\tau)$ is a topological $*-$algebra with respect to a complete metric space topology.
\end{thm}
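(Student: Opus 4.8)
The topological $*$-algebra structure is already in hand from Corollary~\ref{cor:top*alg}, so the only thing left to establish is completeness of the metric $d_{\log}$. The plan is to mirror the commutative completeness argument of Section~\ref{sec:comm}, replacing almost-everywhere convergence of functions by convergence of the generalized singular number functions, and replacing the appeal to the Dominated Convergence Theorem by a Fatou-type lower semicontinuity estimate. The single external input is Nelson's theorem~\cite{Ne74} that $\Sc(\Mcal,\tau)$ is complete in the measure topology.

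First I would take a $\|\cdot\|_{\log}$-Cauchy sequence $(T_n)_{n\ge1}$ in $\Lc_{\log}(\Mcal,\tau)$. Using the inequality \eqref{eq:logchar} recorded in Remark~\ref{rem:as strong}, the $\|\cdot\|_{\log}$-topology dominates the relative measure topology, so $(T_n)$ is Cauchy in measure. By Nelson's completeness result there is a $\tau$-measurable operator $T\in\Sc(\Mcal,\tau)$ with $T_n\to T$ in measure. It then remains to show both that $T\in\Lc_{\log}(\Mcal,\tau)$ and that $T_n\to T$ with respect to $\|\cdot\|_{\log}$.

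The key step, and the one I expect to be the main obstacle, is a lower semicontinuity property: if $S_m\to S$ in measure in $\Sc(\Mcal,\tau)$, then
\[
\|S\|_{\log}\le\liminf_{m\to\infty}\|S_m\|_{\log}.
\]
To prove this I would first argue that convergence in measure of $S_m\to S$ forces the generalized singular number functions $\mu(S_m)$ to converge to $\mu(S)$ in Lebesgue measure on the underlying interval; this is the translation of Nelson's notion of convergence into the language of the $\mu$-functions and should follow from the singular-value estimates of Fack and Kosaki~\cite{FackKosaki}. Granting that, I would pass to a subsequence along which $\int\log(1+\mu(S_m))\,d\lambda$ tends to the liminf, then to a further subsequence along which $\mu(S_m)\to\mu(S)$ almost everywhere, and apply the classical Fatou Lemma to the representation \eqref{eq:Tlogint} to obtain the displayed inequality.

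With this estimate the proof concludes quickly. Fix $\eps>0$ and choose $N$ so that $\|T_n-T_m\|_{\log}<\eps$ for all $n,m\ge N$. For fixed $n\ge N$ we have $T_n-T_m\to T_n-T$ in measure as $m\to\infty$, so the lower semicontinuity estimate gives
\[
\|T_n-T\|_{\log}\le\liminf_{m\to\infty}\|T_n-T_m\|_{\log}\le\eps.
\]
In particular $T_n-T\in\Lc_{\log}(\Mcal,\tau)$, whence $T=T_n-(T_n-T)\in\Lc_{\log}(\Mcal,\tau)$, and the same inequality shows $T_n\to T$ in $\|\cdot\|_{\log}$. This establishes completeness and, together with Corollary~\ref{cor:top*alg}, proves the theorem.
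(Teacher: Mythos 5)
Your proposal is correct and follows essentially the same route as the paper: reduce to completeness via Corollary~\ref{cor:top*alg}, pass from $\|\cdot\|_{\log}$-Cauchy to Cauchy in measure via~\eqref{eq:logchar}, invoke Nelson's completeness theorem to obtain a limit in $\Sc(\Mcal,\tau)$, and conclude with a Fatou-type lower semicontinuity of $\|\cdot\|_{\log}$ applied to $T_n-T_m\to T_n-T$ as $m\to\infty$. The only difference is that where you sketch a from-scratch proof of the lower semicontinuity (via almost-everywhere convergence of the singular value functions and the classical Fatou lemma), the paper simply cites Fack and Kosaki's noncommutative Fatou lemma, Theorem~3.5(i) of~\cite{FackKosaki}, which is exactly this statement.
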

\begin{proof}
In light of Corollary~\ref{cor:top*alg}, we need only show that the metric $d_{\log}$ is complete.
Suppose a sequence $\{A_n\}_{n\ge0}$ is Cauchy in $\|\cdot\|_{\log}$.
Nelson's definition is that a sequence $\{A_n\}_{n\ge0}$ in $\Sc(\Mcal,\tau)$ is Cauchy in measure if, for all $\delta>0$, the quantity
$\tau(E_{|A_n-A_m|}([\delta,\infty)))$ tends to $0$ as $n,m\to\infty$, and this is equivalent to being Cauchy with respect to the metric
$d_\tau$ from~\eqref{eq:dtau}.
Using the inequality~\eqref{eq:logchar},
we see immediately that if a sequence is Cauchy with respect to $\|\cdot\|_{\log}$, then it is Cauchy in measure, and
Nelson showed (see the end of Section~2 of~\cite{Ne74}) that this implies that the sequence converges in measure.
Let $A$ be the limit.

We claim that $A\in\Lc_{\log}(\mathcal{M},\tau)$.
Using Lemma~\ref{lem:ncFnorm}\eqref{it:Dsubadd}, we see that $\|A_n\|_{\log}$ stays bounded as $n\to\infty$.
It follows from
Fack and Kosaki's version of Fatou's Lemma in $\Sc(\Mcal,\tau)$, namely, Theorem~3.5(i) of~\cite{FackKosaki},
that $\log(1+|A|)\in\Lc_1(\mathcal{M},\tau)$.
This proves the claim.

It remains to show that $\|A_n-A\|_{\log}\to0$.
Given $\eps>0$, there exists an integer $N$ such that
$$\tau(\log(1+|A_n-A_m|))\leq\varepsilon,\quad(n,m>N).$$
Letting $m\to\infty$ and again using Fack and Kosaki's version of Fatou's Lemma,
we have
$$\|A_n-A\|_{\log}=\tau(\log(1+|A_n-A|))\leq\eps,\quad (n>N).$$
\end{proof}

\end{document}